\newtheorem{theorem}{Theorem}
\newtheorem{observ}{Observation}
\newtheorem{prop}{Proposition}
\newtheorem{corollary}{Corollary}
\newtheorem{lemma}{Lemma}
\newtheorem{remark}{Remark}
\newcommand{\lca}{\mathsf{lca}}
\newcommand{\algodescription}[3]{\noindent{\bf #1} \\%
	\begin{tabular}{@{}lp{\textwidth - 2.2cm}}%
		\emph{Input:} & #2; \\%
		\emph{Outcome:} & #3. %
	\end{tabular}%
	\vspace{0.5em}}
\DeclareMathOperator{\rt}{\rho}
\DeclareMathOperator{\mast}{\mathsf{mast}\xspace}
\DeclareMathOperator{\Seq}{\mathsf{Ord}\xspace}
\DeclareMathOperator{\Le}{\mathsf{Le}\xspace}
\DeclareMathOperator{\cP}{\mathcal{P}}
\newcommand{\bparagraph}[1]{\noindent {\textbf{#1.}}}
\newcommand{\set}[1]{\{#1\}}
\newcommand{\stone}[1]{T^{(#1)}}
\newcommand{\sttwo}[1]{S^{(#1)}}
\begin{document}

\title{On the Extremal Maximum Agreement Subtree Problem}

\newcommand{\email}{}

\author{
Alexey Markin\\
Department of Computer Science, \\
Iowa State University, USA\\
\email{amarkin@iastate.edu}
}

\date{}

\maketitle

\begin{abstract}
	Given two phylogenetic trees with the $\set{1, \ldots, n}$ leaf-set the maximum agreement subtree problem asks what is the maximum size of the subset $A \subseteq \set{1, \ldots, n}$ such that the two trees are equivalent when restricted to $A$. The long-standing extremal version of this problem focuses on the smallest number of leaves, $\mast(n)$, on which any two (binary and unrooted) phylogenetic trees with $n$ leaves must agree. In this work we prove that this number grows asymptotically as $\Theta(\log n)$; thus closing the enduring gap between the lower and upper asymptotic bounds on $\mast(n)$. 
\end{abstract}

\section{Introduction}
The algorithmic aspects of the maximum agreement subtree problem have been heavily researched for many versions of this problem (see, e.g., ~\cite{Steel:1993mast-poly,Cole:2000mast,Amir:1997mast}). The extremal problem explored in this work was first addressed more than 25 years ago by Kubicka et. al~\cite{Kubicka:1992mast}, where they proved the $c_1 (\log\log n)^{1/2}\le \mast(n) \le c_2\log n$ bounds for some constants $c_1$ and $c_2$. The lower bound was later improved to $\Omega(\log\log n)$ by Steel and Sz\'{e}kely~\cite{Steel:2009mast} and then to $\Omega(\sqrt{\log n})$ by Martin and Thatte~\cite{Martin:2013mast}. The result by Martin and Thatte originated from their proof that if at least one of the trees is either a caterpillar or a balanced tree (or an almost-balanced tree) then the maximum agreement subtree must be $\Omega(\log n)$. Additionally, Martin and Thatte conjectured that two rooted balanced trees must agree on at least $\sqrt{n}$ leaves -- this conjecture remains open.

In this work we close the gap between the lower and upper asymptotic bounds and demonstrate that $\mast(n) \in \Theta(\log n)$.
More precisely, first we prove a ``dual" (weaker) theorem stating that if any two phylogenetic trees with the $\set{1, \ldots, n}$ leaf-set are arbitrarily rooted, then they either agree as \emph{rooted} trees on $\Omega(\frac{\log n}{\log\log n})$ leaves or agree as the original unrooted trees on $\Omega(\log n)$ leaves.
Next, we extend this theorem with a more involved analysis and obtain the main result.

\section{Preliminaries}
A \emph{(phylogenetic $X$-)tree} is a binary unrooted tree with all internal nodes of degree three and leaves bijectively labeled by elements of set $X$; for convenience, we identify leaves with their labels from $X$. The set of leaves of a tree $T$ is denoted by $\Le(T)$, which is used when set $X$ is not explicitly defined. Two $X$-trees are identical if there exists a label-preserving graph isomorphism between them.
Given a set $Y \subset X$, $Y$-tree $T|Y$ is defined as the binary unrooted tree such that the minimal connected subgraph of $T$ which contains all leaves from $Y$ is a subdivision of $T|Y$. For convenience, we define the \emph{size} of a tree as $|T| := |\Le(T)| = |X|$.

A \emph{rooted (phylogenetic $X$-)tree} $T$ is a binary rooted tree with a designated root node of degree two, denoted $\rt(T)$, and each internal node having a designated left child and a right child.
Given a set $Y \subset X$ a rooted $Y$-tree $T|Y$ is defined similarly to the unrooted case. Given a node $v \in T$, $T_{v}$ denotes the subtree of $T$ rooted at $v$.

A rooted tree $T$ defines a partial order on its nodes: given two nodes $x$ and $y$ we say $x \preceq y$ if $x$ is a descendant of $y$ (and $x \prec y$ if additionally $x \ne y$). Further, we say that $x$ and $y$ are \emph{incomparable} if neither $x \preceq y$ nor $y \preceq x$. For a set $Z \subseteq X$ the \emph{least common ancestor (lca)} of $Z$, denoted $\lca_T(Z)$, is the lowest node $v$ such that each $l \in Z$ is a descendant of $v$. 

For a rooted $X$-tree $T$ let $\Seq(T)$ be the left-to-right ordering of leaves induced by the pre-order traversal of nodes of $T$. For example, $\Seq(T)$ for $T$ being the rooted tree from Figure~\ref{fig:caterpillar}~(right) is $(4,3,1,2,5)$. We refer to $\Seq(T)$ as the \emph{leaf ordering} of $T$.
We will often identify the leaves of $T$ with their indices in $\Seq(T)$.


\begin{figure}
	\centering
	\begin{tikzpicture}[scale=0.7]
		\tikzstyle{vertex} = [circle,draw,fill,inner sep=0pt, minimum size=4pt]
		\tikzstyle{edge} = [draw,thick,-]
		\tikzset{vlabel/.style 2 args={#1=1pt of #2}}
		\foreach \pos/\name in {{(0,0)/v1}, {(1.1,0)/v2}, {(2.2,0)/v3},
			{(4.4,0)/v4}, {(-1,1)/l1}, {(-1,-1)/l2}, {(1.1,-1)/l3}, {(2.2,-1)/l4}, {(5.4,-1)/l5}, {(5.4,1)/l6}, {(3.3,0)/v5}, {(3.3,-1)/l7}}
			\node[vertex] (\name) at \pos {};
		\node [vlabel={above}{l1}] {5};
		\node [vlabel={below}{l2}] {3};
		\node [vlabel={below}{l3}] {2};
		\node [vlabel={below}{l4}] {1};
		\node [vlabel={below}{l5}] {6};
		\node [vlabel={above}{l6}] {4};
		\node [vlabel={below}{l7}] {7};
		\foreach \u/\v in {{v1/v2}, {v1/l1}, {v1/l2}, {v2/l3}, {v2/v3}, {v3/v4}, {v3/l4}, {v4/l5}, {v4/l6}, {v5/l7}}
			\path [edge] (\u) -- (\v);
			
		\def\d{0.9}
		\def\s{10}
		\foreach \pos/\name in {{(\s,\d+\d)/r1},{(\s-\d,\d)/rl1}, {(\s+\d,\d)/r2}, {(\s,0)/rl2}, {(\s+\d+\d,0)/r3}, {(\s+\d,-\d)/rl3}, {(\s+\d+\d+\d,-\d)/r4}, {(\s+\d+\d,-\d-\d)/rl4}, {(\s+\d+\d+\d+\d,-\d-\d)/rl5}}
			\node[vertex] (\name) at \pos {};
		\foreach \u/\v in {{r1/r4}, {r1/rl1}, {r2/rl2}, {r3/rl3}, {r4/rl4}, {r4/rl5}}
			\path [edge] (\u) -- (\v);
		\node [vlabel={below left}{rl1}] {4};
		\node [vlabel={below left}{rl2}] {3};
		\node [vlabel={below left}{rl3}] {1};
		\node [vlabel={below}{rl4}] {2};
		\node [vlabel={below}{rl5}] {5};
	\end{tikzpicture}
	\caption{Unrooted (left) and rooted (right) examples of caterpillar trees.}
	\label{fig:caterpillar}
\end{figure}

\smallskip
\bparagraph{Caterpillar trees}
An unrooted or rooted $X$-tree is a \emph{caterpillar} if every internal node (including, if present, the root) is adjacent to at least one leaf.
Figure~\ref{fig:caterpillar} demonstrates the structure of caterpillars.

\smallskip
\bparagraph{Maximum agreement subtree}
For two (unrooted or rooted) trees $T$ and $S$ on $\set{1, \ldots, n}$ leaf-set a \emph{maximum agreement set} is the maximum set $Y \subseteq \set{1, \ldots, n}$, such that $T|Y = S|Y$ up to label-preserving graph isomorphism. The tree $T|Y$ is called a \emph{maximum agreement subtree} and the size of the maximum agreement set/subtree is denoted by $\mast(T, S)$.

Let $\cP(n)$ be the set of all \emph{unrooted} $X$-trees with $X = \set{1, \ldots, n}$ then
\[
\mast(n) := \min_{T, S \in \cP(n)}(\mast(T, S)).
\]
That is, $\mast(n)$ is the minimum number of leaves on which any two unrooted $X$-trees must agree. Throughout the work we use $\log x$ to denote $\log_2 x$.

\section{Dual lower bound result}
In this section we prove a ``dual" (rooted/unrooted) lower bound result for $\mast(n)$ and lay the foundation for our main result.

Given any two unrooted $X$-trees $T$ and $S$ with $X = \set{1, \ldots, n}$ and $n \ge 4$, 
let $T'$ and $S'$ be rooted trees obtained from $T$ and $S$ respectively by rooting them at arbitrarily chosen edges $e_T \in E(T), e_S \in E(S)$ (the rooting is performed by subdividing the chosen edge with a new node and designating this node as the root). For each internal node in $T'$ and $S'$ then one of the children is designated to be the left child and the other to be the right child arbitrarily.
In this section we prove the following theorem.

\begin{theorem}
	\label{thm:mast-weaker}
	Either the rooted trees $T'$ and $S'$ have a rooted (caterpillar) agreement subtree of size at least $\frac{1}{4}\frac{\log n}{\log\log n}$ or the original unrooted trees $T$ and $S$ have a (caterpillar) agreement subtree of size at least $\log n$.
\end{theorem}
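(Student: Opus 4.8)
The plan is to exploit the leaf orderings $\Seq(T')$ and $\Seq(S')$ induced by the two rootings. Each leaf $i \in \{1,\dots,n\}$ gets a pair of coordinates: its position $a_i$ in $\Seq(T')$ and its position $b_i$ in $\Seq(S')$. By the Erd\H{o}s--Szekeres theorem, among $n$ such pairs there is a subset $A$ of size at least $\sqrt{n}$ that is monotone in both coordinates simultaneously — say increasing in $\Seq(T')$ and either increasing or decreasing in $\Seq(S')$. Restricting attention to $A$, the leaves appear in a consistent left-to-right order (up to a global reversal) in both rooted trees. The first key step is to argue that when a large leaf set is ``sorted'' the same way by the pre-order traversal of both trees, one can extract structure: either the trees restricted to $A$ already share a large common \emph{caterpillar} (giving the rooted agreement subtree), or they are forced to be ``spread out'' in a way that manufactures an unrooted agreement subtree.

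Concretely, I would look at how $T'|A$ decomposes recursively. For a leaf ordering, the pre-order structure of a binary tree on $m$ leaves corresponds to a binary ``cut'' tree: the root splits $A$ into a prefix (left subtree) and suffix (right subtree), and so on. Comparing the two cut-trees of $T'|A$ and $S'|A$ — both over the \emph{same} linear order on $A$ — one wants a common refinement argument. The natural dichotomy is: if at many levels the two trees cut the current block at nearly the same place, one builds a long nested chain of agreeing splits, which is exactly a caterpillar agreement subtree of size $\Omega(\log|A|) = \Omega(\log n)$ in the rooted sense — but we only need $\frac14 \log n/\log\log n$, so there is slack. If instead the cuts diverge, the block gets partitioned into many sub-blocks that are ``independent'' across the two trees; picking one leaf from each such sub-block and recursing should yield either a large balanced-like agreement structure or let us invoke the Martin--Thatte machinery (caterpillar-vs-anything forces $\Omega(\log)$). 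I would formalize this with a potential/measure argument: define a quantity that decreases geometrically and charge each halving step either to building caterpillar depth or to a branching event.

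The second ingredient handles the \emph{unrooted} alternative. The subtlety is that $T'$ and $S'$ were rooted at arbitrary edges, so a rooted disagreement near the two roots is an artifact. The point is that an unrooted agreement subtree of $T$ and $S$ need not respect either root; so even if the rooted cut-trees over $A$ disagree violently at the top (because the roots $e_T, e_S$ sit in very different places), the induced unrooted topologies on a carefully chosen sub-block can still coincide. I would make this precise by choosing $A$ so that it avoids, or lies on one side of, the problematic top-level cuts; within such a block the root location is ``forgotten'' and rooted agreement on the block upgrades to unrooted agreement on $T, S$. Combining: starting from $|A| \ge \sqrt n$, either the recursive caterpillar-extraction reaches depth $\frac14 \log n/\log\log n$ (rooted case), or we escape into a block where the roots are irrelevant and the unrooted structure gives $\log n$ agreeing leaves.

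The main obstacle I expect is the bookkeeping in the recursive dichotomy: quantifying ``the cuts are close'' versus ``the cuts diverge'' so that in the divergent case the sub-blocks are genuinely independent across \emph{both} trees (not just one), and ensuring the branching factor times recursion depth multiplies out to the claimed $\frac{\log n}{\log\log n}$ rather than something weaker like $\sqrt{\log n}$ — this is presumably where the $\log\log n$ denominator comes from (a $(\log n)$-way branching surviving $\log n/\log\log n$ levels). Getting the unrooted upgrade to interact cleanly with this recursion, rather than being a separate case analysis, is the delicate part.
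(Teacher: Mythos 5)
Your opening move (Erd\H{o}s--Szekeres on the two pre-order leaf orderings, yielding a set of size $\ge\sqrt{n}$ on which $\Seq(T')$ and $\Seq(S')$ agree up to reversal) is exactly the paper's set-up, and your rough accounting intuition (``a $(\log n)$-way branching surviving $\log n/\log\log n$ levels'') points in the right direction for the rooted bound. But beyond that the proposal stays at the level of a plan, and the two concrete mechanisms it gestures at are either missing or quantitatively wrong. First, you have the two alternatives essentially swapped. In the paper, the \emph{rooted} alternative is the weak one: each iteration contributes a single leaf $x$ to the agreement set via a ``good pair'' $(x,Y)$ --- $x$ incomparable with $\lca(Y)$ \emph{in both trees} --- and then recurses into $Y$ with $|Y|\ge |X^{(i)}|/(2\log n)$; starting from $\sqrt n$ leaves, the shrink factor $2\log n$ per step is what forces the $\tfrac14\log n/\log\log n$ bound. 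Your ``nested chain of agreeing splits gives a rooted caterpillar of size $\Omega(\log n)$, so there is slack'' is not something the argument delivers, and nothing in your sketch specifies the structural dichotomy (the paper decomposes $T^{(i)}$ along the path from the left-most leaf to the root into subtrees $Q_1,\dots,Q_k$, and $S^{(i)}$ along the path from the root to the right-most leaf into $R_1,\dots,R_m$, and proves that either some $Q_j$ meets some subtree of $S^{(i)}$ in a constant or $1/(2\log n)$ fraction of the leaves, giving a good pair, or all the $Q_j,R_l$ are small).

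Second, and more seriously, your mechanism for the \emph{unrooted} $\log n$ alternative does not work as stated. You propose that when the cuts diverge you pick one leaf per sub-block and invoke the Martin--Thatte caterpillar-vs-anything bound; but that bound only returns $\tfrac13\log(\text{number of blocks})$ leaves, which is far below $\log n$ (and Martin--Thatte is not needed for this theorem at all --- the paper uses it only later, for Theorem~\ref{thm:mast}). The actual mechanism is that when every $Q_j$ and every $R_l$ has size below $|X^{(i)}|/\log n$, the leaves of each such subtree form an interval of length $\le |X^{(i)}|/\log n$ in the \emph{common} ordering, so a greedy sweep selects $\ge\log n$ leaves hitting each $Q_j$ and each $R_l$ at most once; the restriction to these leaves is then a caterpillar in \emph{both} trees simultaneously, hung along the left path in $T^{(i)}$ and along the right path in $S^{(i)}$, i.e.\ rooted at opposite ends --- which is precisely why this alternative only gives agreement of the \emph{de-rooted} (original unrooted) trees, with no logarithmic loss. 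Your ``choose $A$ to avoid the problematic top-level cuts so the root is forgotten'' is a placeholder for this, but without the interval/one-leaf-per-subtree construction and the observation that a common leaf ordering plus caterpillar shape in both trees forces identity after de-rooting, the unrooted $\log n$ claim is unsupported.
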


The rest of the section is dedicated to the construction proof of Theorem~\ref{thm:mast-weaker}.
To begin with, the following na\"{i}ve observation is implicitly used throughout the proof. 
\begin{observ}
	If $A$ is an agreement subtree of $T|Q$ and $S|Q$, where $Q \subset \Le(T)=\Le(S)$, then $A$ is an agreement subtree of $T$ and $S$ (in both rooted and unrooted cases).
\end{observ}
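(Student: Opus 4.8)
The plan is to reduce the statement to the transitivity of the restriction operation, namely the identity $(T|Q)|R = T|R$ whenever $R \subseteq Q \subseteq \Le(T)$. Granting this, the Observation is immediate: if $A$ is an agreement subtree of $T|Q$ and $S|Q$, then writing $R := \Le(A) \subseteq Q$ we have $A = (T|Q)|R$ and $A = (S|Q)|R$ up to label-preserving isomorphism; by the identity these equal $T|R$ and $S|R$ respectively, so $A = T|R = S|R$, which is exactly the statement that $A$ is an agreement subtree of $T$ and $S$.

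So the real work is establishing $(T|Q)|R = T|R$. First I would unfold the definition: $T|Q$ is the tree of which the minimal connected subgraph $M_Q \subseteq T$ spanning the leaves of $Q$ is a subdivision — equivalently, $M_Q$ with all degree-two vertices suppressed, via a suppression map $\pi_Q \colon M_Q \to T|Q$. I would then show that the minimal connected subgraph of $T|Q$ spanning $R$ is the image under $\pi_Q$ of the minimal connected subgraph $M_R$ of $M_Q$ spanning $R$, and that $M_R$ is in fact already the minimal connected subgraph of $T$ spanning $R$: any connected subgraph of $T$ containing $R \subseteq Q$ must contain the unique $T$-path between any two leaves of $R$, and those paths all lie inside $M_Q$. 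Hence suppressing degree-two vertices of $M_R$ produces both $(T|Q)|R$ and $T|R$, giving the identity.

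For the rooted case the argument is the same after two small additions: one checks that the root of $T|Q$ is $\pi_Q(\lca_T(Q))$, that the least common ancestor of $R$ within $T|Q$ is the image of $\lca_T(R)$, and that the left/right child designations are inherited consistently along the suppression maps, so that the rooted restriction composes as in the unrooted case. The only point needing minor care is the bookkeeping of suppressed degree-two vertices — and, in the rooted setting, the subtlety that $\lca_T(R)$ might not survive as a node of $T|Q$ — but since $R \subseteq Q$ the ancestor/descendant and left/right relationships among leaves of $R$ are preserved, so no genuine difficulty arises. I expect this degree-two and root bookkeeping to be the only nontrivial ingredient, and it is routine; the statement is essentially a remark recording that restriction is transitive.
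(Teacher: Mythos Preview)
Your proposal is correct: reducing to the transitivity identity $(T|Q)|R = T|R$ for $R \subseteq Q$ is exactly the right justification, and your sketch of why minimal spanning subgraphs and degree-two suppressions compose is sound in both the unrooted and rooted settings. The paper itself offers no proof at all---it labels the statement a ``na\"{i}ve observation'' and uses it implicitly---so your argument is not so much a different route as the standard unpacking of what the paper takes for granted.
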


Further, Observation~\ref{obs:caterpillar} helps understanding our construction.
\begin{observ}
	\label{obs:caterpillar}
	A rooted $X$-tree is a caterpillar if and only if there exists an ordering of leaves $1, \ldots, |X|=n$ (which is unique for any caterpillar tree) such that for each $1 \le i \le n$ the least common ancestor of set $R := \set{i+1, \ldots, n}$ is strictly below the least common ancestor of set $R \cup \set{i}$ (or, equivalently, $i$ is incomparable with $\lca(R)$). Further, if such ordering is identical for two rooted $X$-trees $T'$ and $S'$ then these trees are equivalent caterpillars due to uniqueness.
\end{observ}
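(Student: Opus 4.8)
\textbf{Proof plan for Observation~\ref{obs:caterpillar}.}
The statement is an iff together with a uniqueness/rigidity claim, so I would split the work into three parts: (i) if $T'$ is a caterpillar there is an ordering with the stated lca property; (ii) conversely, if such an ordering exists then $T'$ is a caterpillar; (iii) the ordering is unique, and hence if two caterpillars share the ordering they are equal. The natural device throughout is the \emph{spine}: the unique path from $\rt(T')$ down to the (single) internal node all of whose children are leaves. In a caterpillar every internal node lies on this spine, so each spine node except the bottom one has exactly one leaf child hanging off it, and the bottom spine node has two leaf children.

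For direction (i), let the spine be $u_1=\rt(T') \succ u_2 \succ \cdots \succ u_{k}$ with $u_k$ the bottom node. Assign the ordering by reading leaves top-down along the spine: the leaf child of $u_1$ gets index $1$, the leaf child of $u_2$ gets index $2$, \dots, the leaf child of $u_{k-1}$ gets index $k-1$, and the two leaf children of $u_k$ get indices $k-1+1$ and $k+1=n$ in some fixed order (say left child before right child). Now fix $i$ and set $R=\set{i+1,\ldots,n}$. For $i<n-1$ the node $\lca(R)$ is $u_{i+1}$ (it is the topmost spine node all of whose descendant leaves have index $>i$), whereas $\lca(R\cup\set{i})$ is $u_i$, which is strictly above $u_{i+1}$; equivalently leaf $i$, being the leaf child of $u_i$, is incomparable with $u_{i+1}=\lca(R)$. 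For $i=n-1$ one checks $\lca(R)=n$'s own leaf while $\lca(R\cup\set{n-1})=u_k$, again strictly above. For $i=n$ the claim is vacuous, or one adopts the convention $\lca(\emptyset)$ undefined; I will phrase the Observation's quantifier as $1\le i\le n-1$ to sidestep the empty-set corner (this matches how it is used). So the stated ordering works.

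For direction (ii), suppose an ordering $1,\ldots,n$ has the property that for every $i$, leaf $i$ is incomparable with $v_i:=\lca(\set{i+1,\ldots,n})$. First note $v_n$ (I mean $v_{n-1}=\lca(\set{n})$... let me instead track the chain $v_1 \preceq v_2 \preceq \cdots$): since $\set{i+2,\ldots,n}\subseteq\set{i+1,\ldots,n}$ we get $v_{i+1}\preceq v_i$, so $v_1\preceq v_2\preceq\cdots\preceq v_{n-1}$ is a descending-then-ascending... — concretely $v_1 = \lca(\set{2,\ldots,n}) = \rt(T')$ when leaf $1$ is incomparable with it, and in general $v_i$ is an ancestor of $v_{i+1}$. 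The incomparability of leaf $i$ with $v_i$ forces, at the node $v_i$, that one child subtree contains exactly $\set{i+1,\ldots,n}$ restricted appropriately and leaf $i$ hangs in the sibling direction; iterating this from $i=1$ upward shows every internal node of $T'$ lies on the path $\rt(T')=v_1 \succeq v_2 \succeq \cdots$, i.e. $T'$ has a single root-to-bottom spine and is therefore a caterpillar. I would write this as an induction on $i$: the invariant is that after processing $1,\ldots,i$, the tree $T'|\set{i+1,\ldots,n}$ together with the attachment points of $1,\ldots,i$ is a caterpillar with spine-top $v_i$.

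For the uniqueness part (iii), observe that the ordering is essentially forced by the lca condition: leaf $n$ must be one of the two leaf children of the bottom spine node and leaf $1$ must be the leaf child of the root, and in general leaf $i$ is the unique non-spine leaf whose attachment point is $v_i$; since the spine of a caterpillar is a fixed path, the only freedom is the order of the two bottom leaves, which I pin down by the left-child/right-child convention (the paper's rooted trees are equipped with a left/right order), making the ordering genuinely unique. Consequently, if $T'$ and $S'$ are caterpillars with the same ordering, then for each $i$ they attach leaf $i$ at the $i$-th spine node counted from the root and realize the same two bottom leaves in the same left/right order, so the obvious bijection of spine nodes is a label-preserving isomorphism, giving $T'=S'$. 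The only place demanding care is part (ii): one must argue that a \emph{single} violation of the caterpillar shape (some internal node off the spine, i.e. a ``cherry'' deep in the tree or a branching that creates two incomparable internal nodes both having $\ge 2$ leaf descendants) produces an index $i$ for which leaf $i$ is comparable to $\lca(\set{i+1,\ldots,n})$; the cleanest way is the induction above, where the invariant breaks exactly at the first such branching.
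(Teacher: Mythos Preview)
The paper states Observation~\ref{obs:caterpillar} without proof, so there is no ``paper's own proof'' to compare against; your plan is therefore being judged on its own merits.

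Your three-part decomposition is the right one, and parts~(i) and~(iii) are essentially fine (the bottom-two-leaves ambiguity is real and your resolution via the left/right convention is the appropriate one in this paper's setting, since rooted trees here carry left/right data). Part~(ii), however, is muddled as written: you flip the direction of the chain mid-sentence, and the claim ``$v_1=\lca(\{2,\ldots,n\})=\rt(T')$ when leaf $1$ is incomparable with it'' is actually false --- if $v_1$ were the root then leaf~$1$ would be \emph{below} it, not incomparable. The clean version of your induction is this: set $v_0:=\rt(T')$ and $v_i:=\lca(\{i+1,\ldots,n\})$; from $i\preceq v_{i-1}$ and $i$ incomparable with $v_i$ one gets $v_i\prec v_{i-1}$ strictly, and moreover every leaf $j<i$ lies off the $v_{j-1}$--$v_j$ branch and hence is not below $v_{i-1}$. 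Thus the only leaf below $v_{i-1}$ and not below $v_i$ is leaf~$i$, which forces $v_i$ to be a \emph{child} of $v_{i-1}$ with sibling exactly the leaf~$i$. Iterating gives the spine $v_0\succ v_1\succ\cdots\succ v_{n-2}$ with leaf~$i$ attached at $v_{i-1}$ and $v_{n-1}=$ leaf~$n$, i.e.\ a caterpillar. Once you replace your part~(ii) sketch with this argument the proof is complete.
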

We now turn to the construction.

\smallskip
\bparagraph{Set up} Consider the left-to-right leaf orderings $\Seq(T')$ and $\Seq(S')$ of $T'$ and $S'$ respectively and let $\alpha$ then be a common subsequence of $\Seq(T')$ and $\Seq(S')$ (or of $\Seq(T')$ and $\Seq(S')$-reversed) of size at least $\sqrt{n}$. Note that $\alpha$ is guaranteed to exist by the Erd{\H o}s-Szekeres theorem (see~\cite{Erdos:1935}). If $\alpha$ is common to $\Seq(T')$ and $\Seq(S')$-reversed, then swap left and right children for all internal nodes in $S'$, which would then make $\alpha$ common to $\Seq(T')$ and $\Seq(S')$.
Let $X^{(1)} := \set{x \mid x \in \alpha}$ and let $\stone{1} := T'|X^{(1)}$ and $\sttwo{1} := S'|X^{(1)}$. Note that $\Seq(\stone{1}) \equiv \Seq(\sttwo{1})$.
	
For convenience of analysis, we present our construction as an iterative algorithm: on each iteration it either locates a \emph{large} ($\log n$) agreement caterpillar or adds a new leaf to an agreement set $M$ and proceeds to the next iteration with a restricted leaf-set. Next, we describe it more formally.

\smallskip
\algodescription{Iteration description.}%
{rooted $X^{(i)}$-trees $\stone{i}$ and $\sttwo{i}$ with the same leaf orderings, a set of agreement leaves $M$ with $M \cap X^{(i)} = \emptyset$}
{Either
	\begin{itemize}
		\item[(i)] finds a taxon $x \in X^{(i)}$ and a set $Y \subset X^{(i)}$, such that $\lca_{\stone{i}}(Y) \prec \lca_{\stone{i}}(Y \cup \set{x})$, $\lca_{\sttwo{i}}(Y) \prec \lca_{\sttwo{i}}(Y \cup \set{x})$, and $|Y| \ge \frac{|X^{(i)}|}{2\log n}$, or
		\item[(ii)] finds an agreement caterpillar for original trees $T$ and $S$ of size at least $\log n$.
	\end{itemize}
In the former case the construction proceeds to the next iteration by adding $x$ to $M$ and setting $\stone{i+1} = \stone{i}|Y, \sttwo{i+1} = \sttwo{i}|Y$. In the latter case the iteration stops, as an agreement subtree satisfying Theorem~\ref{thm:mast-weaker} was located}

We call a pair $(x \in X^{(i)}, Y \subset X^{(i)})$ with the properties from outcome (i) above (i.e., $\lca_{\stone{i}}(Y) \prec \lca_{\stone{i}}(Y \cup \set{x})$, $\lca_{\sttwo{i}}(Y) \prec \lca_{\sttwo{i}}(Y \cup \set{x})$, and $|Y| \ge \frac{|X^{(i)}|}{2\log n}$) a \emph{good pair}. Next, we demonstrate that such an iterative algorithm always exists.

\smallskip
\bparagraph{Construction proof}
To begin with, without loss of generality assume that the left subtree of $\stone{i}$ (subtree rooted at the left child of the root) is larger than or equal to the right subtree of $\stone{i}$ in terms of the number of nodes. If that is not the case, then swap left and right children of all internal nodes in both $\stone{i}$ and $\sttwo{i}$ -- that will preserve the equivalence of leaf orderings of $\stone{i}$ and $\sttwo{i}$. 

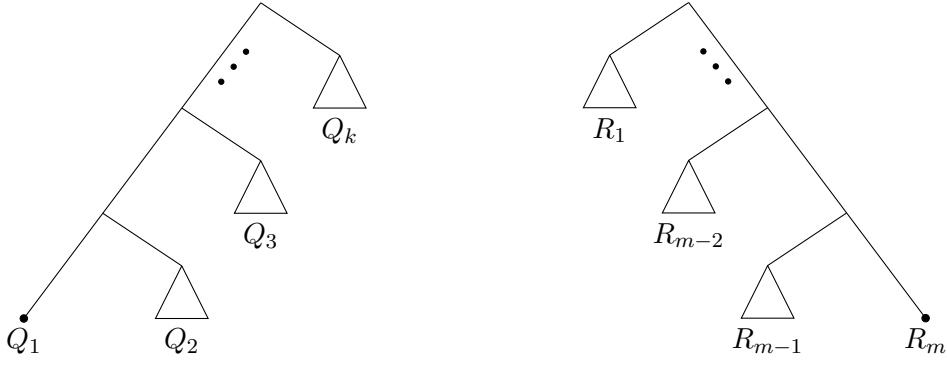
\begin{figure}
	\centering
	\begin{subfigure}{0.45\textwidth}
		\centering
		\begin{tikzpicture}[scale=0.7]
		\draw [] (0,0) node[below] {$Q_1$} -- (4.5,6);
		\draw[fill] (0,0) circle [radius=2pt];
		\draw [] (1.5,2) -- (3,1);
		\draw [] (3,1) -- (3.5,0) -- (2.5,0) -- (3,1);
		\node [below] at (3,0) {$Q_2$};
		\draw [] (3,4) -- (4.5,3);
		\draw [] (4.5,3) -- (5,2) -- (4,2) -- (4.5,3);
		\node [below] at (4.5,2) {$Q_3$};
		\draw [] (4.5,6) -- (6,5);
		\draw [] (6,5) -- (6.5,4) -- (5.5,4) -- (6,5);
		\node [below] at (6,4) {$Q_k$};
		\node [circle, fill, draw, inner sep=0.7pt] (d1) at (3.75,4.5) {};
		\node [circle, fill, draw, inner sep=0.7pt, above right=4pt and 3pt of d1] (d2) {};
		\node [circle, fill, draw, inner sep=0.7pt, above right=4pt and 3pt of d2] (d3) {};
		\end{tikzpicture}
	\end{subfigure}
~
	\begin{subfigure}{0.45\textwidth}
		\centering
		\begin{tikzpicture}[scale=0.7]
		\draw [] (0,0) node[below] {$R_m$} -- (-4.5,6);
		\draw[fill] (0,0) circle [radius=2pt];
		\draw [] (-1.5,2) -- (-3,1);
		\draw [] (-3,1) -- (-3.5,0) -- (-2.5,0) -- (-3,1);
		\node [below] at (-3,0) {$R_{m-1}$};
		\draw [] (-3,4) -- (-4.5,3);
		\draw [] (-4.5,3) -- (-5,2) -- (-4,2) -- (-4.5,3);
		\node [below] at (-4.5,2) {$R_{m-2}$};
		\draw [] (-4.5,6) -- (-6,5);
		\draw [] (-6,5) -- (-6.5,4) -- (-5.5,4) -- (-6,5);
		\node [below] at (-6,4) {$R_1$};
		\node [circle, fill, draw, inner sep=0.7pt] (d1) at (-3.75,4.5) {};
		\node [circle, fill, draw, inner sep=0.7pt, above left=4pt and 3pt of d1] (d2) {};
		\node [circle, fill, draw, inner sep=0.7pt, above left=4pt and 3pt of d2] (d3) {};
		\end{tikzpicture}
	\end{subfigure}	
	\caption{Schematic definition of $Q_1, \ldots, Q_k, R_1, \ldots, R_m$ subtrees from trees $\stone{i}$ (left) and $\sttwo{i}$ (right)}
	\label{fig:structure}
\end{figure}

Next, let $P_1 = (u_1, \ldots, u_k)$ be the path in $\stone{i}$ from the left-most leaf to the root and $P_2 = (w_1, \ldots, w_m)$ be the path in $\sttwo{i}$ from the root to the right-most leaf. Then let $Q_1, \ldots, Q_k$ and $R_1, \ldots, R_m$ be the subtrees \emph{induced} by paths $P_1$ and $P_2$ respectively (see the illustration on Figure~\ref{fig:structure}). That is, we define $Q_j$ for some $1 \le j \le k$ (and similarly we define $R_j$) as follows:
\[
Q_j :=
\begin{cases}
	\stone{i}_{u_j} & \text{if } u_j \text{ is the lowest node in } P_1 \text { (which is } u_1 \text{ in that case)}; \\
	\stone{i}_{v} & \text{otherwise, where } v \text{ is the child of } u_j \text{ that is not on the path.}
\end{cases}
\]
Note that $Q_1$ and $R_m$ are trivial subtrees that contain only one leaf each. Additionally, note that subtrees are chosen in the way such that leaves in $Q_i$ (or $R_i$) are to the left of leaves in $Q_j$ (or $R_j$) in the common leaf ordering if $i < j$.
\begin{lemma}
	\label{lem:structure}
	For any fixed constant $C \ge 4$ at least one of the two statements always holds:
	\begin{itemize}
		\item[(i)] Exists $u \in \stone{i}, v \in \sttwo{i}$, and $x \in X^{(i)}$ such that $x \not \in \Le(\stone{i}_u)$, $x \not \in \Le(\sttwo{i}_v)$, and $\big|\Le(\stone{i}_u) \cap \Le(\sttwo{i}_v)\big| \ge \frac{|X^{(i)}|}{C}$.
		\item[(ii)] All $|Q_j|, |R_l| \le \max\big(\frac{2|X^{(i)}|}{C}, 1\big)$ for all $1 \le j \le k, 1 \le l \le m$.
	\end{itemize}
\end{lemma}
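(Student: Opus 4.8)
Put $n_i:=|X^{(i)}|$ and $M:=\max(2n_i/C,1)$, and assume statement (ii) fails, so some tree among $Q_1,\dots,Q_k,R_1,\dots,R_m$ has more than $M$ leaves; the goal is to exhibit $u,v,x$ as in (i). Since $|Q_1|=|R_m|=1\le M$, that subtree has at least two leaves, so $n_i\ge 2$, and then $C\ge 4$ forces $M\le n_i/2$. I will use throughout that the leaf set of every subtree of $\stone{i}$ or $\sttwo{i}$ is an interval of the common ordering $1,\dots,n_i$, together with the following consequences of how the subtrees were chosen: $\Le(Q_j)$ omits leaf $1$ for $j\ge 2$ and omits leaf $n_i$ for $j\le k-1$; $\Le(Q_k)=[p,n_i]$ with $p-1=n_i-|Q_k|\ge n_i/2$ (this is where the normalization ``the left subtree of $\stone{i}$ is the larger one'' enters); $\Le(R_1)=[1,|R_1|]$ is the left subtree of $\rt(\sttwo{i})$; and, symmetrically, $\Le(R_l)$ omits leaf $n_i$ for $l\le m-1$ and omits leaf $1$ for $l\ge 2$.

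The workhorse is a \emph{transfer step}: if $\Gamma$ is $\stone{i}$ or $\sttwo{i}$ and $W\subseteq\{1,\dots,n_i\}$ is an interval with $|W|>M$, then $\Gamma$ has a non-root node $y$ with $|\Le(\Gamma_y)\cap W|\ge n_i/C$. Indeed, if $\lca_\Gamma(W)\ne\rt(\Gamma)$ take $y=\lca_\Gamma(W)$, so that $\Le(\Gamma_y)\supseteq W$; otherwise $W$ meets both subtrees of $\rt(\Gamma)$, hence one of those subtrees contains at least $|W|/2>M/2\ge n_i/C$ of the leaves of $W$, and letting $y$ be the $\lca$ in $\Gamma$ of those leaves we get a non-root node whose subtree contains them.

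\emph{Case 1: some $Q_j$ with $2\le j\le k-1$, or some $R_l$ with $2\le l\le m-1$, has more than $M$ leaves.} Then its leaf set $W$ is an \emph{interior} interval, omitting both leaf $1$ and leaf $n_i$. Apply the transfer step in the other tree to get a non-root node, called $v$ (respectively $u$), and let $u$ (respectively $v$) be $\rt(Q_j)$ (respectively $\rt(R_l)$); then $\Le(\stone{i}_u)\cap\Le(\sttwo{i}_v)$ has at least $n_i/C$ leaves. Since $W$ is interior and the other leaf set is a proper subinterval, $\Le(\stone{i}_u)\cup\Le(\sttwo{i}_v)\ne\{1,\dots,n_i\}$, so some $x$ lies outside both, as required. \emph{Case 2: the large subtree is $Q_k$ or $R_1$} (the case remaining once Case 1 fails). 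If $R_1$ has more than $M$ leaves, put $u:=$ the left child of $\rt(\stone{i})$ and $v:=\rt(R_1)$; then $\Le(\stone{i}_u)=[1,\ell]$ is the larger subtree of $\rt(\stone{i})$, so $\ell\ge n_i/2\ge M$, while $\Le(\sttwo{i}_v)=[1,|R_1|]$, and the intersection $[1,\min(\ell,|R_1|)]$ has at least $M\ge n_i/C$ leaves; both intervals omit leaf $n_i$, so $x:=n_i$ works. Otherwise $|R_1|\le M\le n_i/2$ and $Q_k$ is the large subtree; put $u:=\rt(Q_k)$ and $v:=$ the right child of $\rt(\sttwo{i})$; then $\Le(\stone{i}_u)=[p,n_i]$ with $p\ge n_i/2+1\ge|R_1|+1$, while $\Le(\sttwo{i}_v)=[|R_1|+1,n_i]\supseteq[p,n_i]$, so the intersection is $[p,n_i]$ of size $|Q_k|>M\ge n_i/C$; both intervals omit leaf $1$, so $x:=1$ works. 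These two cases are exhaustive.

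The step I expect to be the real obstacle is Case 2. The temptation is to run the transfer step on $\Le(Q_k)=[p,n_i]$ as well, but its $\lca$ in $\sttwo{i}$ may be an ancestor of leaf $1$, and splitting at the root of $\sttwo{i}$ can merely push the problem into a smaller subtree with the same defect, leading to an uncontrolled recursion. The two size bounds $\ell\ge n_i/2$ (from the normalization) and $|R_1|\le n_i/2$ (valid when $R_1$ is small, and this is where $C\ge 4$ is used) are exactly what let one name $u$ and $v$ outright instead. Beyond that, the only thing to watch is keeping track of which of leaves $1$ and $n_i$ each chosen subtree omits, so that the witness $x$ is always available.
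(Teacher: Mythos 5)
Your proof is correct and follows essentially the same route as the paper: identify a large induced subtree, intersect its (interval) leaf set with a root subtree (or lca subtree) of the other tree, choose the witness $x$ as an extremal leaf, and invoke the left-subtree-larger normalization for the boundary subtrees $Q_k$ and $R_1$. The only difference is presentational: you treat the large-$R_l$ and small-$|X^{(i)}|$ cases explicitly where the paper disposes of them by a ``without loss of generality'' and a trivial-case remark.
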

\begin{proof}
	If $\frac{2|X^{(i)}|}{C} < 2$ then the statement is trivially true (case (i) holds if $|X^{(i)}| > 1$ and case (ii) must hold otherwise). Assume $\frac{2|X^{(i)}|}{C} \ge 2$; it is sufficient to show that if (ii) does not hold, then (i) must hold. Without loss of generality assume that exists $1 < j \le k$ such that $|Q_j| > \frac{2|X^{(i)}|}{C}$ (note that $j \ne 1$, since $|Q_1| = 1$). Consider now the left and right subtrees of $\sttwo{i}$, $\sttwo{i}_l = R_1$ and $\sttwo{i}_r$ respectively; i.e., subtrees rooted at the children of $\rt(\sttwo{i})$. We consider two cases.
	\begin{itemize}
		\item $\mathbf{1 < j < k.}$ Assume that $\Le(Q_j)$ intersects with $\Le(\sttwo{i}_l)$ by at least $\frac{|X^{(i)}|}{C}$ leaves; then choose $u := \rt(Q_j)$, $v:=\rt(\sttwo{i}_l)$, and $x$ to be the \emph{right-most} leaf in the leaf ordering. Clearly, $x$ does not belong to $Q_j$, since $j < k$ and $x$ does not belong to $\sttwo{i}_l$ since $x$ is located in the right subtree of $\sttwo{i}$. That is, case (i) of our lemma holds.
		
		Otherwise, $\Le(Q_j)$ should intersect with $\Le(\sttwo{i}_r)$ by at least $\frac{|X^{(i)}|}{C}$ leaves. Then choose $u := \rt(Q_j)$, $v:=\rt(\sttwo{i}_r)$, and $x$ to be the \emph{left-most} leaf in the leaf ordering. For symmetric arguments case (i) of our lemma holds again.
		\item $\mathbf{j = k.}$ If $\Le(Q_k)$ intersects with $\Le(\sttwo{i}_r)$ by at least $\frac{|X^{(i)}|}{C}$ leaves then clearly case (i) holds if we choose $x$ to be, e.g., the left-most leaf.
		
		Otherwise, assume that it is not the case. It then follows that $|\sttwo{i}_r| < \frac{|X^{(i)}|}{C} \le \frac{|X^{(i)}|}{4}$; hence, $|\sttwo{i}_l| \ge \frac{3}{4}|X^{(i)}|$. Given our initial assumption that the left subtree of $\stone{i}$ is at least as large as its right subtree, it follows that choosing $u := \rt(\stone{i}_l)$ (root of the left subtree of $\stone{i}$), $v:= \rt(\sttwo{i}_l)$, and $x$ to be the right-most leaf satisfies conditions of case (i) of our lemma.
	\end{itemize}
\end{proof}
Note that when case (i) holds in the above lemma, the pair $(x, \Le(\stone{i}_u) \cap \Le(\sttwo{i}_v))$ is a \emph{good pair} for large enough $n$ (i.e., with $2\log n \ge C \implies n \ge 4$ when $C =4$). Additionally, note that choosing larger values of $C$ decreases the upper bound on sizes of $Q_1, \ldots, Q_k, R_1, \ldots, R_m$ subtrees, when case (i) of the lemma does not hold. We will exploit this property in the next section, when proving our main result. As for this section, we can consider $C$ to be equal $4$. 

\begin{lemma}
	\label{lem:logn-caterpillar}
	If each subtree $Q_1, \ldots, Q_k$ and $R_1, \ldots, R_m$ is of size smaller than $\frac{|X^{(i)}|}{\log n}$ then de-rooted $\stone{i}$ and $\sttwo{i}$ (and hence original $T$ and $S$) agree on a caterpillar tree of size at least $\log n$.
\end{lemma}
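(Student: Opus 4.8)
The plan is to build the agreement caterpillar as a set $Z$ of roughly $\log n$ leaves containing at most one leaf from each block $Q_j$ and at most one from each block $R_l$, and then to verify that such a $Z$ induces the \emph{same} caterpillar in both $\stone{i}$ and $\sttwo{i}$. The structural fact I would start from is that, since $P_1$ runs from the left-most leaf of $\stone{i}$ to its root, the sets $\Le(Q_1),\dots,\Le(Q_k)$ partition $X^{(i)}$, and (as already noted) $\Le(Q_a)$ precedes $\Le(Q_b)$ in the common ordering $\Seq(\stone{i})\equiv\Seq(\sttwo{i})$ whenever $a<b$; hence the $\Le(Q_j)$ are consecutive intervals of $\{1,\dots,|X^{(i)}|\}$, and symmetrically so are the $\Le(R_l)$. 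By hypothesis every such interval has length strictly below $|X^{(i)}|/\log n$ (in particular $|X^{(i)}|>\log n$, since $|Q_1|=1$).

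First I would choose $Z$ greedily from left to right in the common leaf ordering: set $z_1:=1$, and having chosen $z_t$, observe that both the $Q$-interval and the $R$-interval containing $z_t$ end at an index below $z_t+|X^{(i)}|/\log n$, so let $z_{t+1}$ be the first index exceeding the right endpoints of both of these intervals. Each step then advances the index by less than $|X^{(i)}|/\log n$, and $z_{t+1}$ falls in a $Q$-interval and an $R$-interval not used before; since indices range only over $\{1,\dots,|X^{(i)}|\}$ this yields $Z=\{z_1<\dots<z_\ell\}$ with $\ell\ge\log n$, which is a transversal of the $Q$-blocks and simultaneously of the $R$-blocks (the exact count being a routine arithmetic check).

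The heart of the argument is the claim that, for any such $Z$, the tree $\stone{i}|Z$ is a caterpillar whose leaf ordering, viewed as an unrooted caterpillar, is $(z_1,\dots,z_\ell)$. Writing $z_t\in Q_{j_t}$ with $j_1<\dots<j_\ell$, the minimal connected subgraph of $\stone{i}$ spanning $Z$ is the segment of $P_1$ from $u_{j_1}$ to $u_{j_\ell}$ together with, for each $t$, the path from $u_{j_t}$ into $Q_{j_t}$ down to $z_t$; suppressing degree-two vertices collapses this to a caterpillar whose spine consists of the images of the interior path-nodes $u_{j_2},\dots,u_{j_{\ell-1}}$, with $z_1,z_2$ attached to the first spine vertex, $z_3,\dots,z_{\ell-2}$ one per interior spine vertex, and $z_{\ell-1},z_\ell$ attached to the last spine vertex — the two extreme path-nodes $u_{j_1}$ and $u_{j_\ell}$ become degree-two and disappear, which is why two chosen leaves sit on each end vertex. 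The mirror-image argument applied to $\sttwo{i}$, using the path $P_2$ (which runs from the root to the \emph{right}-most leaf, exactly the orientation that makes the two caterpillars agree) and the blocks $R_l$, shows that $\sttwo{i}|Z$ is a caterpillar with the same unrooted leaf ordering $(z_1,\dots,z_\ell)$ and the same end-pairs $\{z_1,z_2\}$ and $\{z_{\ell-1},z_\ell\}$.

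Finally, since an unrooted caterpillar is determined by its leaf ordering up to reversal and independently swapping the two leaves at each end — symmetries shared by both — I would conclude $\stone{i}|Z=\sttwo{i}|Z$, so $Z$ is an agreement subtree of the de-rooted $\stone{i},\sttwo{i}$, hence of $T$ and $S$, of size $\ell\ge\log n$. I expect the main obstacle to be the caterpillar-identification claim: carefully justifying that, after suppression, $\stone{i}|Z$ (and likewise $\sttwo{i}|Z$) is \emph{exactly} a caterpillar with leaf ordering $(z_1,\dots,z_\ell)$ — handling the collapse at the two extremities and checking that the orderings produced by the two trees are genuinely compatible. The greedy selection itself is easy apart from minor rounding.
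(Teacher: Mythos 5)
Your proposal is correct and follows essentially the same route as the paper: the same greedy left-to-right transversal that advances past both the $Q$-interval and the $R$-interval containing the current leaf (at most one leaf per block, each jump below $|X^{(i)}|/\log n$, hence at least $\log n$ leaves), followed by the observation that both restrictions are caterpillars with the common leaf ordering and therefore coincide after de-rooting. The only difference is that you spell out the caterpillar-identification and end-cherry details that the paper leaves as ``not difficult to see,'' which is a welcome elaboration rather than a deviation.
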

\begin{proof}
	We are going to construct a set $A \subseteq X^{(i)}$ such that $|A| \ge \log n$ and $A$ contains at most one leaf from each of the subtrees $Q_1, \ldots, Q_k, R_1, \ldots, R_m$. It is not then difficult to see that $\stone{i}|A$ and $\sttwo{i}|A$ (and hence $T|A$ and $S|A$) are caterpillars, which must be equivalent after de-rooting, since the leaf orderings of $\stone{i}$ and $\sttwo{i}$ are equivalent.
	
	For convenience, we identify the leaves in $\stone{i}$ and $\sttwo{i}$ with their indices, $1, \ldots, |X^{(i)}| = n^{(i)}$, from the common left-to-right leaf ordering. The leaves from each subtree $Q_1, \ldots, Q_k, R_1, \ldots, R_m$ then represent an integer interval within $[1, \ldots, n^{(i)}]$ of size at most $\frac{n^{(i)}}{\log n}$; moreover, these intervals are ordered form left to right in the same way as the subtrees are. Now construct set $A$ as follows:
	\begin{algorithm}[!h]
		\caption{$\Theta(\log n)$ agreement set}
		\begin{algorithmic}[1]
			\State $h := 1, A := \emptyset;$
			\While{$h \le n^{(i)}$}
				\State Add $h$ to $A$;
				\State Let $Q_j$ and $R_l$ be the subtrees that contain leaf $h$;
				\State Let $r_1$ and $r_2$ be the largest leaves from $Q_j$ and $R_l$ respectively;
				\State $h := \max(r_1, r_2) + 1$. \label{ln:head-jump}
			\EndWhile
		\end{algorithmic}
	\end{algorithm}

	Note that the above algorithm does not add more than one leaf to $A$ from the same subtree. Further, in Line~\ref{ln:head-jump} $h$ increases by at most $\frac{n^{(i)}}{\log n}$; thus, the size of $A$ in the end of the loop is at least $\log n$.
\end{proof}

\begin{lemma}
	\label{lem:logn-iteration}
	Assume that for some fixed $C \ge 4$ we have $|Q_j|, |R_l| \le \frac{2|X^{(i)}|}{C}$ for all $1 \le j \le k, 1 \le l \le m$ (that is, case (ii) from Lemma~\ref{lem:structure} holds) and at least one of the subtrees $Q_1, \ldots, Q_k, R_1, \ldots, R_m$ is of size at least $\frac{|X^{(i)}|}{\log n}$. Then there exists a good pair $(x, Y)$.
\end{lemma}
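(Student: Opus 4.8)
The plan is to reduce the statement, exactly as in the discussion following Lemma~\ref{lem:structure}, to producing nodes $u\in\stone{i}$, $v\in\sttwo{i}$ and a leaf $x\in X^{(i)}$ with $x\notin\Le(\stone{i}_u)$, $x\notin\Le(\sttwo{i}_v)$ and $\big|\Le(\stone{i}_u)\cap\Le(\sttwo{i}_v)\big|\ge|X^{(i)}|/(2\log n)$: indeed, putting $Y:=\Le(\stone{i}_u)\cap\Le(\sttwo{i}_v)$ we have $\lca_{\stone{i}}(Y)\preceq u$, so $x\notin\Le(\stone{i}_u)$ forces $\lca_{\stone{i}}(Y)\prec\lca_{\stone{i}}(Y\cup\set{x})$, likewise for $\sttwo{i}$, and $|Y|$ is large enough — hence $(x,Y)$ is a good pair. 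For the construction I would fix notation: write $n^{(i)}:=|X^{(i)}|$, identify leaves with their common indices $1,\ldots,n^{(i)}$, and record that each $\Le(Q_j)$ and each $\Le(R_l)$ is an integer interval, that these partition $[1,n^{(i)}]$ from left to right (with the singletons $\Le(Q_1)=\set{1}$ and $\Le(R_m)=\set{n^{(i)}}$ at the two ends), that the children subtrees of $\rt(\stone{i})$ are a prefix $[1,\ell]$ and a suffix, the suffix being exactly $Q_k$ (so $\ell=n^{(i)}-|Q_k|$), and symmetrically that the prefix child subtree of $\rt(\sttwo{i})$ is exactly $R_1$. If $n^{(i)}\le\log n$ the claim is immediate ($Y$ one leaf, $x$ another), so assume the large subtree has size at least $2$; then it is some $Q_j$ with $2\le j\le k$ or some $R_l$ with $1\le l\le m-1$.

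I would treat first the case of a large $Q_j$, say $\Le(Q_j)=[p,q]$, so $p\ge 2$, and $q<n^{(i)}$ unless $j=k$. If $\lca_{\sttwo{i}}([p,q])\ne\rt(\sttwo{i})$, set $u:=\rt(Q_j)$ and $v:=\lca_{\sttwo{i}}([p,q])$: then $\Le(\stone{i}_u)\cap\Le(\sttwo{i}_v)=\Le(Q_j)$ has size $\ge n^{(i)}/\log n$, while $\Le(\stone{i}_u)\cup\Le(\sttwo{i}_v)=\Le(\sttwo{i}_v)$ is a proper subset of $X^{(i)}$, so some $x$ avoids both. If $\lca_{\sttwo{i}}([p,q])=\rt(\sttwo{i})$, then $[p,q]$ meets both child subtrees of $\rt(\sttwo{i})$, i.e.\ $p\le|R_1|<q$; here I would first exclude $j=k$, since $j=k$ would give $p=\ell+1\ge n^{(i)}/2+1$ by the assumed balancedness of $\stone{i}$, whereas hypothesis (ii) gives $|R_1|\le 2n^{(i)}/C\le n^{(i)}/2<p$, contradicting $p\le|R_1|$ — so $q<n^{(i)}$ as well. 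Now one of $[p,|R_1|]$, $[|R_1|+1,q]$ has at least $|[p,q]|/2\ge n^{(i)}/(2\log n)$ leaves. For $[p,|R_1|]$ I would take $u:=\rt(Q_j)$, $v:=\rt(R_1)$, so that $\Le(\stone{i}_u)\cap\Le(\sttwo{i}_v)=[p,|R_1|]$ and $\Le(\stone{i}_u)\cup\Le(\sttwo{i}_v)=[1,q]$, whence $x:=q+1\le n^{(i)}$ works. For $[|R_1|+1,q]$ I would take $u:=\rt(Q_j)$ and $v$ the root of the suffix child subtree of $\sttwo{i}$, so that $\Le(\stone{i}_u)\cap\Le(\sttwo{i}_v)=[|R_1|+1,q]$ and $\Le(\stone{i}_u)\cup\Le(\sttwo{i}_v)=[p,n^{(i)}]$, whence $x:=p-1\ge 1$ works.

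The case of a large $R_l$ is the mirror image, with $\stone{i}$, $\sttwo{i}$ and ``prefix/suffix'' interchanged; its counterpart of the $j=k$ exclusion is that $\Le(R_1)=[1,|R_1|]$ cannot meet the suffix child subtree $Q_k$ of $\stone{i}$, since $|R_1|\le 2n^{(i)}/C\le n^{(i)}/2\le\ell$ by hypothesis (ii) — so for $l=1$ one always lands in the ``direct'' branch, and for $2\le l\le m-1$, writing $\Le(R_l)=[p',q']$, one has $p'\ge 2$ and $q'<n^{(i)}$, so both straddling sub-branches go through. The only genuinely substantive point is this pair of ``vacuous straddle'' observations, and that is exactly where hypothesis (ii) (all $|Q_j|,|R_l|\le 2n^{(i)}/C$ with $C\ge 4$) and the balancedness of $\stone{i}$'s root get used: without them the large subtree could occupy an interval that reaches one end of the leaf ordering while straddling the opposite tree's root, leaving no admissible $x$. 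I expect this to be the main — though modest — obstacle; the remainder is just choosing the half of the straddling interval that confines $Y$ to a single child subtree and reading off the range that $x$ must avoid.
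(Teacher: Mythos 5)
Your proof is correct and takes essentially the same route as the paper: in both arguments the large subtree (say $Q_j$) is intersected with a child subtree of $\sttwo{i}$'s root containing at least half of its leaves (or, in your non-straddling branch, all of them), an extreme leaf outside both subtrees plays the role of $x$, and the bounds $|Q_k|,|R_1|\le \frac{2|X^{(i)}|}{C}\le \frac{|X^{(i)}|}{2}$ together with the root-balancedness of $\stone{i}$ dispose of the boundary situation that the paper handles as the case $j=k$. The differences are purely organizational (splitting on whether $\Le(Q_j)$ straddles $\rt(\sttwo{i})$, and writing out the mirrored $R_l$ case that the paper dismisses by symmetry) and do not change the substance.
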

\begin{proof}
	The proof structure resembles the one of Lemma~\ref{lem:structure}. Without loss of generality assume that $Q_j$ is a tree of size $\ge \frac{|X^{(i)}|}{\log n}$ (note that $j \ne 1$ since $|Q_1| = 1$). We then distinguish two cases.
	
	First, assume that $j < k$; consider the left and right subtrees, $\sttwo{i}_l$ and $\sttwo{i}_r$, of $\sttwo{i}$ and let $F \in \set{\sttwo{i}_l, \sttwo{i}_r}$ be the subtree with $|\Le(Q_j) \cap \Le(F)| \ge \frac{|\Le(Q_j)|}{2}$. If $F = \sttwo{i}_l$ then choose $x$ to be the right-most leaf in the common leaf ordering. Otherwise, when $F = \sttwo{i}_r$, choose $x$ to be the left-most leaf. It is then not difficult to see that $(x, \Le(Q_j) \cap \Le(F))$ is a \emph{good pair}.

	Finally, assume that $j = k$; then note that $\sttwo{i}_l = R_1$ and by our assumption $|R_1| \le \frac{2|X^{(i)}|}{C} \le \frac{|X^{(i)}|}{2}$. Similarly, we have $|Q_k| \le \frac{|X^{(i)}|}{2}$ and given that $\Le(R_1)$ is ``on the left", while $\Le(Q_k)$ in ``on the right", we have $\Le(R_1) \cap \Le(Q_k) = \emptyset$. Then choose $x$ to be any leaf from $\Le(R_1)$ and $Y = \Le(Q_k)$. Clearly, $(x, Y)$ is a \emph{good pair}. 
\end{proof}

Combining Lemmas~\ref{lem:structure}, \ref{lem:logn-caterpillar}, and \ref{lem:logn-iteration} we have the following corollary.
\begin{corollary}
	\label{cor:weak-result}
	At least one of the following statements holds.
	\begin{itemize}
		\item[(1)] There is a good pair $(x, Y)$ with $|Y| \ge \frac{|X^{(i)}|}{C}$;
		\item[(2)] There is a ``regular" good pair $(x, Y)$ with $|Y| \ge \frac{|X^{(i)}|}{2\log n}$;
		\item[(3)] de-rooted $\stone{i}$ and $\sttwo{i}$ (and therefore original $T$ and $S$) agree on a caterpillar of size at least $\log n$.  
	\end{itemize}
\end{corollary}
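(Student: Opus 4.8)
The plan is a straightforward case analysis that threads the three preceding lemmas together, keeping the fixed constant $C \ge 4$ (and assuming $n$ large enough that $2\log n \ge C$, so that a subset of $X^{(i)}$ of size at least $\frac{|X^{(i)}|}{C}$ is automatically large enough to witness a good pair). First I would apply Lemma~\ref{lem:structure} with this $C$. If its case~(i) holds, with witnesses $u \in \stone{i}$, $v \in \sttwo{i}$, $x \in X^{(i)}$, then I claim the pair $(x, Y)$ with $Y := \Le(\stone{i}_u) \cap \Le(\sttwo{i}_v)$ is exactly statement~(1): its size is at least $\frac{|X^{(i)}|}{C}$ by hypothesis, and since $Y \subseteq \Le(\stone{i}_u)$ we have $\lca_{\stone{i}}(Y) \preceq u$, while $x \notin \Le(\stone{i}_u)$ forces $\lca_{\stone{i}}(Y \cup \set{x})$ to be a strict ancestor of $u$; hence $\lca_{\stone{i}}(Y) \prec \lca_{\stone{i}}(Y \cup \set{x})$, and the symmetric argument in $\sttwo{i}$ finishes the verification that $(x,Y)$ is a good pair. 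This is precisely the remark recorded just after Lemma~\ref{lem:structure}.

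Otherwise case~(i) of Lemma~\ref{lem:structure} fails, so case~(ii) holds: every $Q_j$ and every $R_l$ has size at most $\max\big(\frac{2|X^{(i)}|}{C}, 1\big)$. I would then split on the largest of these induced subtrees. If all of $Q_1, \ldots, Q_k, R_1, \ldots, R_m$ have size strictly below $\frac{|X^{(i)}|}{\log n}$, then Lemma~\ref{lem:logn-caterpillar} applies verbatim and yields statement~(3): de-rooted $\stone{i}$ and $\sttwo{i}$, and hence $T$ and $S$, agree on a caterpillar of size at least $\log n$. If instead some $Q_j$ or $R_l$ has size at least $\frac{|X^{(i)}|}{\log n}$, then this fact together with case~(ii) is exactly the hypothesis of Lemma~\ref{lem:logn-iteration}, which returns a good pair $(x,Y)$; by the definition of a good pair $|Y| \ge \frac{|X^{(i)}|}{2\log n}$, which is statement~(2). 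Since case~(i) of Lemma~\ref{lem:structure} either holds or fails, and in the failing case the ``all subtrees small / some subtree large'' dichotomy is exhaustive, one of (1)--(3) always holds.

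The combinatorial substance all lives inside Lemmas~\ref{lem:structure}, \ref{lem:logn-caterpillar}, and \ref{lem:logn-iteration}, so the corollary is essentially just their bookkeeping combination; the only spot that calls for a word of care is the degenerate regime where $\frac{2|X^{(i)}|}{C} < 1$, so the bound in Lemma~\ref{lem:structure}(ii) reads $1$ and Lemma~\ref{lem:logn-iteration} is not literally applicable. For $C = 4$ this forces $|X^{(i)}| \le 1$, a trivial situation in which the iterative construction has already terminated, so it can simply be excluded at the outset. Apart from that, I do not expect any genuine obstacle in assembling this corollary.
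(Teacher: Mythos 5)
Your proposal is correct and matches the paper's intent exactly: the paper gives no separate argument for Corollary~\ref{cor:weak-result} beyond ``combining Lemmas~\ref{lem:structure}, \ref{lem:logn-caterpillar}, and \ref{lem:logn-iteration}'', and your case analysis (Lemma~\ref{lem:structure}(i) yields statement~(1) via the remark that $(x,\Le(\stone{i}_u)\cap\Le(\sttwo{i}_v))$ is a good pair, while under Lemma~\ref{lem:structure}(ii) the ``all subtrees small/some subtree large'' dichotomy yields statement~(3) via Lemma~\ref{lem:logn-caterpillar} or statement~(2) via Lemma~\ref{lem:logn-iteration}) is precisely that combination, with the degenerate and $2\log n \ge C$ caveats handled appropriately.
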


While it is not necessary for this section, we distinguish cases (1) and (2) above as we will use them separately later for the proof of our main result. Corollary~\ref{cor:weak-result} then implies that we can have an algorithm fitting our original \emph{iteration description}; Algorithm~\ref{alg:weak-result} presents it.

\begin{algorithm}[!h]
	\caption{Locating an agreement caterpillar}
	\label{alg:weak-result}
	\begin{algorithmic}[1]
		\State \textbf{Input: } rooted $X^{(1)}$-trees $\stone{1}, \sttwo{1}$ with the same leaf orderings ($\Seq(\stone{1}) = \Seq(\sttwo{1})$).
		\State $M := \emptyset, i := 1;$
		\While{$|X^{(i)}| > 1$}
			\If{exists a good pair $(x, Y)$}
				\State Add $x$ to $M$ and set $X^{(i+1)} := Y, \stone{i+1} := \stone{i}|Y, \sttwo{i+1} := \sttwo{i}|Y$;
			\Else
				\State There must exist a set $A$, such that original trees $T$ and $S$ agree on $A$ and $|A| \ge \log n$; \label{ln:logn-cat-1}
				\State \Return $A$. \label{ln:logn-cat-2}
			\EndIf
		\EndWhile
		\State \Return $M$.
	\end{algorithmic}
\end{algorithm}

Note that by Observation~\ref{obs:caterpillar} $\stone{1}|M$ and $\sttwo{1}|M$ (and hence $T'|M$ and $S'|M$) must be equivalent caterpillar trees. We now find the lower bound on the size of the returned set $M$ (given that Lines~\ref{ln:logn-cat-1} and \ref{ln:logn-cat-2} are not encountered).
Note that we have $|X^{(i+1)}| \ge \frac{|X^{(i)}|}{2 \log n}$ and assume that the number iterations performed is $p$. Then $|X^{(p+1)}| = 1$ and
\begin{align*}
|X^{(p+1)}| \cdot (2 \log n)^p &= (2 \log n)^p \ge |X^{(1)}| \ge \sqrt{n}\\
p \log(2 \log n) &\ge \frac{1}{2}\log n\\
p &\ge \frac{1}{2} \frac{\log n}{\log\log n + 1} \ge \frac{1}{4}\frac{\log n}{\log\log n},
\end{align*}
with the last inequality holding for $n \ge 4$.

\begin{remark}
	\label{rem:mast-dual}
	As a corollary of Theorem~\ref{thm:mast-weaker} we have $\mast(n) \in \Omega(\frac{\log n}{\log\log n})$. However, a stronger result can be obtained as we demonstrate in the next section.
\end{remark}

\section{Asymptotics of $\mast(n)$}
We are going to refine the analysis presented in the previous section in order to obtain our main result.

\begin{theorem}
	\label{thm:mast}
	$\mast(n) \in \Theta(\log n).$
\end{theorem}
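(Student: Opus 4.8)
The plan is to refine the iterative construction of Section~3 so that it produces a rooted agreement caterpillar of size $\Omega(\log n)$, rather than the weaker $\Omega(\frac{\log n}{\log\log n})$ bound from Theorem~\ref{thm:mast-weaker}. The key observation is where the previous bound was lossy: in Algorithm~\ref{alg:weak-result}, each iteration adds only a single leaf $x$ to the agreement set $M$ while shrinking the leaf-set $X^{(i)}$ by a factor of up to $2\log n$ (case (2) of Corollary~\ref{cor:weak-result}). If instead the leaf-set only shrank by a \emph{constant} factor per leaf added, we would immediately get $\log_{O(1)}\sqrt{n} = \Omega(\log n)$ iterations. Thus I would separate the analysis according to which branch of Corollary~\ref{cor:weak-result} is taken: case (1) — a good pair with $|Y|\ge |X^{(i)}|/C$ for a constant $C$ — and case (3) — a large agreement caterpillar — are both "cheap", but case (2) is expensive. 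The heart of the argument is to show that case (2) cannot occur too many times before one of the cheap cases kicks in, or else to amortize its cost.

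Concretely, I would do the following. First, choose the constant $C$ in Lemma~\ref{lem:structure} to be a large constant (the excerpt explicitly flags that larger $C$ tightens the bound on $|Q_j|,|R_l|$ when case (i) fails — when case (ii) holds all subtrees along the two spines have size at most $\frac{2|X^{(i)}|}{C}$). Case (2) of the corollary — invoked via Lemma~\ref{lem:logn-iteration} — arises precisely when some spine subtree $Q_j$ (or $R_l$) is "medium", of size between $\frac{|X^{(i)}|}{\log n}$ and $\frac{2|X^{(i)}|}{C}$. The idea is that when we recurse into such a medium $Q_j = \stone{i+1}$, the tree $\stone{i+1}$ is no longer arbitrary: the very fact that $Q_j$ sat as a pendant subtree off the left spine of $\stone{i}$, while the matched part of $\sttwo{i}$ lives entirely in one child subtree of $\sttwo{i}$, gives structural control. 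I would track a potential function — something like $\log|X^{(i)}|$ minus a bonus for "progress already banked" — and argue that either we fall into a constant-factor good pair / large caterpillar within a bounded number of medium steps, or the medium subtrees themselves chain together into a long agreement caterpillar (a leaf from each pendant $Q_j$ along the spine, exactly as in Lemma~\ref{lem:logn-caterpillar}, since those pendant subtrees occupy disjoint intervals of the common leaf ordering and hence contribute a caterpillar after de-rooting). In other words, a long run of case-(2) steps is itself converted into the $\log n$ caterpillar we want, for free.

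The main obstacle, and where the "more involved analysis" of the paper surely lies, is making the amortization between the rooted caterpillar $M$ and the unrooted caterpillar coming from the spine subtrees airtight: a case-(2) step both adds a leaf to the \emph{rooted} agreement set and "uses up" a spine subtree of $\stone{i}$, and one must ensure these two accountings are compatible — e.g., that choosing a single representative leaf per medium $Q_j$ and simultaneously banking the good-pair leaf $x$ doesn't double-count or break the caterpillar structure of either the rooted set $M$ or the unrooted spine set $A$. I expect the proof sets up a combined quantity, something of the form $|M| + (\text{contribution toward an unrooted caterpillar})$, shows it increases by a constant at every iteration regardless of which case fires, and shows $|X^{(i)}|$ shrinks by at most a constant factor \emph{per unit of that quantity}; combined with $|X^{(1)}| \ge \sqrt{n}$ from the Erd{\H o}s–Szekeres step, this yields a rooted-or-unrooted agreement subtree of size $\Omega(\log n)$. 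Together with the $O(\log n)$ upper bound of Kubicka et~al.~\cite{Kubicka:1992mast}, this gives $\mast(n)\in\Theta(\log n)$.
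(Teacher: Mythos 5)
You correctly locate the lossy step (case (2) of Corollary~\ref{cor:weak-result}: a shrink by a factor of up to $2\log n$ paid for by a single leaf), and the paper's proof likewise splits iterations into constant-factor good pairs and ``expensive'' steps. But the mechanism you propose for handling the expensive steps does not work quantitatively. Your claim that ``a long run of case-(2) steps is itself converted into the $\log n$ caterpillar'' fails for two reasons. First, the medium subtrees encountered at successive iterations are not disjoint pendant subtrees hanging off one common spine: each case-(2) step recurses \emph{into} the medium subtree $Q_j$ (restricted to its overlap with a child subtree of $\sttwo{i}$), so these subtrees are nested, and taking one representative leaf per iteration reproduces exactly the rooted caterpillar $M$ of Observation~\ref{obs:caterpillar}, not a new unrooted one; within a single iteration the spine argument of Lemma~\ref{lem:logn-caterpillar} needs \emph{all} spine subtrees to be small, and the presence of medium subtrees caps that caterpillar at a constant (about $C/2$) or at best sub-logarithmic length. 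Second, and decisively, there can be at most $O\big(\frac{\log n}{\log\log n}\big)$ case-(2) steps in total before $X^{(i)}$ is exhausted, so any scheme that gains $O(1)$ units (one leaf for $M$ plus one ``banked'' caterpillar leaf) per expensive step tops out at $O\big(\frac{\log n}{\log\log n}\big)$ -- your proposed potential that ``increases by a constant at every iteration'' while the set shrinks by a $\log n$ factor in the bad case cannot reach $\Omega(\log n)$. To win, each expensive step must bank $\omega(\log\log n)$ leaves, not $O(1)$.

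This is precisely what the paper's extra machinery supplies and what is missing from your sketch. Lemma~\ref{lem:logn-cat-strong} shows that when no constant-factor good pair exists, one can extract, besides the continuation set $Y$ with $|Y| \ge \frac{|X^{(i)}|}{10\log n}$, a second set $X$ of polynomial size ($|X| \ge n^{1/16}$) whose least common ancestor is incomparable with that of $Y$ in \emph{both} trees; the subcase where such an $X$ cannot be found is killed by the Martin--Thatte/Steel--Sz\'ekely caterpillar bound (Proposition~\ref{prop:caterpillar}), which your proposal never invokes and which seems unavoidable there, since the obstruction is exactly a polynomially long caterpillar inside $\sttwo{i}$. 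The dual Theorem~\ref{thm:mast-weaker} is then applied recursively to $\stone{i}|X$ and $\sttwo{i}|X$, so each expensive step either terminates with a $\frac{1}{16}\log n$ agreement set or banks a block $M'$ of $\Omega\big(\frac{\log n}{\log\log n}\big)$ leaves -- far exceeding its $\Theta(\log\log n)$ cost in the logarithmic budget -- and Observation~\ref{obs:main-agreement} (the incomparability of the banked blocks and singletons) is what makes the concatenated set $M$ a genuine agreement set, resolving the ``double-counting'' worry you raise but leave open. Without an analogue of Lemma~\ref{lem:logn-cat-strong}, Proposition~\ref{prop:caterpillar}, and the recursive use of Theorem~\ref{thm:mast-weaker}, your amortization cannot be completed.
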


The upper bound of $\mast(n) \in O(\log(n))$ was shown by Kubicka et. al~\cite{Kubicka:1992mast}. To observe this result consider a balanced tree and a caterpillar tree; the maximum agreement subtree then must have the caterpillar shape and the size of such caterpillar is bounded by the length of the longest path in the balanced tree, which is $O(\log(n))$.

We now show that $\mast(n) \in \Omega(\log n)$. To do that we re-use the set up from the previous section. That is, we focus on rooted $X^{(1)}$-trees $\stone{1}$ and $\sttwo{1}$ with the same leaf orderings and of size at least $\sqrt{n}$. Further, we re-use a similar iteration methodology for construction of an agreement tree.

Recall that Lemma~\ref{lem:structure} from the previous section allows us to choose a constant $C$, which we set to $\mathbf{C := 40}$ in this section. We now refine Lemma~\ref{lem:logn-caterpillar}.

\begin{lemma}
	\label{lem:logn-cat-strong}
	Assume that $|X^{(i)}| \ge n^{\frac{1}{4}}$ and $|Q_j|, |R_l| \le \frac{2|X^{(i)}|}{C} = \frac{|X^{(i)}|}{20}$ for all $1 \le j \le k, 1 \le l \le m$ (i.e., case (ii) from Lemma~\ref{lem:structure} holds); then at least one of the following statements holds.
	\begin{itemize}
		\item[(i)] There exist disjoint sets $X, Y \subset X^{(i)}$ such that $|X| \ge n^{1/16}$, $|Y| \ge \frac{|X^{(i)}|}{10\log n}$, $\lca_{\stone{i}}(X)$ is incomparable with $\lca_{\stone{i}}(Y)$, and $\lca_{\sttwo{i}}(X)$ is incomparable with $\lca_{\sttwo{i}}(Y)$;
		\item[(ii)] $T$ and $S$ agree on at least $\frac{1}{48}\log n$ leaves (that induce a caterpillar).
	\end{itemize}
\end{lemma}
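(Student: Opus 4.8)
\emph{Proof proposal.} The plan is to run the same dichotomy as in the previous section, now with a single ``largeness'' threshold $\theta := c\cdot|X^{(i)}|/\log n$, where the constant $c$ is pinned down at the very end so as to be compatible with $C=40$ and with the constants $\tfrac1{16},\tfrac1{10},\tfrac1{48}$ of the statement (one wants $\theta\le\tfrac{|X^{(i)}|}{20}$, valid for $n$ large enough, together with $\tfrac12\theta\ge\tfrac{|X^{(i)}|}{10\log n}$ and $\tfrac1c\ge\tfrac1{48}$). If $|Q_j|,|R_l|<\theta$ for every $j,l$, then the greedy selection from the proof of Lemma~\ref{lem:logn-caterpillar} --- repeatedly take the current leaf $h$ of the common ordering and move $h$ just past the two subtrees (one $Q$-side, one $R$-side) containing it --- advances $h$ by less than $\theta$ at each step, hence harvests more than $|X^{(i)}|/\theta=\tfrac1c\log n$ leaves, at most one per subtree, which by the argument of Lemma~\ref{lem:logn-caterpillar} induce an agreement caterpillar of the de-rooted $\stone{i},\sttwo{i}$ and hence of $T,S$. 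This is conclusion (ii), so from now on assume some subtree has size $\ge\theta$.

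By the symmetry exchanging the two trees (which swaps the $Q$'s with the $R$'s) and the one mirroring left and right (which reverses the common ordering), we may assume the large subtree is $Q_j$ of $\stone{i}$, with $j\ge2$ (since $|Q_1|=1$) and $\theta\le|Q_j|\le\tfrac{2|X^{(i)}|}{C}=\tfrac{|X^{(i)}|}{20}$. Identify $\Le(Q_j)$ with the interval $[a,b]$ of the common ordering; it is a clade of $\stone{i}$ rooted at $c_j:=\rt(Q_j)$, and in $\stone{i}$ the complement splits into the left clade $L=\Le(\stone{i}_{u_{j-1}})=[1,a-1]$ and the right block $RR=[b+1,|X^{(i)}|]$, with $u_{j-1}$ incomparable to $c_j$; after possibly one further mirror assume $|L|\ge|RR|$, so $|L|\ge\tfrac12\bigl(|X^{(i)}|-|Q_j|\bigr)\ge\tfrac9{20}|X^{(i)}|\ge\tfrac13 n^{1/4}$. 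Let $w:=\lca_{\sttwo{i}}(\Le(Q_j))$, with children $w_1,w_2$, so that $\Le(\sttwo{i}_w)=[a',b']\supseteq[a,b]$ and $\{\Le(\sttwo{i}_{w_1}),\Le(\sttwo{i}_{w_2})\}=\{[a',s],[s+1,b']\}$ with $a\le s\le b-1$. Take $Y$ to be the right part $[s+1,b]=\Le(Q_j)\cap\Le(\sttwo{i}_{w_2})$; then $\lca_{\sttwo{i}}(Y)\preceq w_2$ and $\lca_{\stone{i}}(Y)\preceq c_j$. (If only the left part $[a,s]$ of $\Le(Q_j)$ inside $\sttwo{i}_w$ is large enough, run the mirror argument with ``left'' and ``right'', and with $L$ and $RR$, interchanged --- this is what the $RR$ branch is for.)

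It remains to produce $X$ with $|X|\ge n^{1/16}$, $X\cap Y=\emptyset$, and $\lca(X)$ incomparable to $\lca(Y)$ in both trees. The device is the elementary fact that two clades of a rooted tree are leaf-disjoint or nested: it therefore suffices to exhibit a clade $D$ of $\sttwo{i}$ that is leaf-disjoint from $\sttwo{i}_w$, has $\Le(D)\subseteq[1,a-1]$, and has $|\Le(D)|\ge n^{1/16}$. Indeed, taking $X:=\Le(D)$, leaf-disjointness of $D$ from the clade $\sttwo{i}_w\supseteq\sttwo{i}_{\lca_{\sttwo{i}}(Y)}$ makes $\lca_{\sttwo{i}}(X)=\rt(D)$ incomparable to $\lca_{\sttwo{i}}(Y)$, while $X\subseteq[1,a-1]=\Le(\stone{i}_{u_{j-1}})$ makes $\lca_{\stone{i}}(X)\preceq u_{j-1}$ incomparable to $c_j\succeq\lca_{\stone{i}}(Y)$, and $X\cap Y=\emptyset$ since $X\subseteq[1,a-1]$ and $Y\subseteq[a,b]$. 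Such a $D$ is sought among the subtrees of $\sttwo{i}$ hanging to the left off the path from $w$ up to $\rt(\sttwo{i})$: these are pairwise leaf-disjoint clades, each leaf-disjoint from $\sttwo{i}_w$, which together tile the initial block $[1,a'-1]$ of the ordering (and, through sub-clades, part of the left overhang $[a',a-1]$ too). If one of them has $\ge n^{1/16}$ leaves we take it and output conclusion (i). Otherwise all of them are smaller than $n^{1/16}$; since they cover a polynomially large region, $\sttwo{i}$ restricted to $[1,a'-1]$ is a ``caterpillar of tiny blobs'', and feeding $\stone{i}$ and $\sttwo{i}$ restricted to that region into the construction of the previous section (Corollary~\ref{cor:weak-result}/Lemma~\ref{lem:structure}, the blobs now being far below the largeness threshold on the $\sttwo{i}$ side) yields, after iteration, an agreement caterpillar of size $\Omega(\log n)\ge\tfrac1{48}\log n$ for $T$ and $S$, i.e. conclusion (ii). The degenerate possibilities $a'=1$ or $b'=|X^{(i)}|$, where $w$ abuts an end of the ordering, are treated in the same spirit, working from the children of $\rt(\sttwo{i})$.

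The main obstacle is precisely this construction of $X$: one must certify the two incomparabilities \emph{simultaneously} in two trees that share nothing but a leaf ordering. The comfortable gap between the available $|X^{(i)}|\ge n^{1/4}$ and the modest demand $|X|\ge n^{1/16}$ is what supplies the room, but converting it into a proof forces a case split according to which of $L$, $RR$, and the left/right overhang of the clade $\sttwo{i}_w$ is large, and according to whether the relevant side region contains a clade of size $\ge n^{1/16}$ or is so caterpillar-shaped that the recursive greedy already produces conclusion (ii); the last routine step is to verify that a single choice of $c$ makes every branch clear the stated thresholds against $C=40$.
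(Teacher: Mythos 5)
Your overall plan (take the one large path-subtree itself as the source of $Y$, split it at its lca in the other tree, and hunt for $X$ as a clade of $\sttwo{i}$ off to one side) is a genuinely different route from the paper's, but it has two concrete gaps at exactly the hard points. First, the reduction ``after possibly one further mirror assume $|L|\ge|RR|$'' is not available. Mirroring (or swapping the trees) changes which subtrees are the path-induced $Q$'s and $R$'s, so the hypothesis $|Q_j|,|R_l|\le\frac{|X^{(i)}|}{20}$ does not transfer; and independently of the decomposition, which side of the large clade $Q_j$ is itself a clade of $\stone{i}$ is intrinsic: it is always the side toward the leftmost leaf, i.e.\ $[1,a-1]=\Le(\stone{i}_{u_{j-1}})$, while $[b+1,|X^{(i)}|]$ is a union of clades $Q_{j+1},\dots,Q_k$ hanging higher on the path and is in general not a clade. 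So the case where $Q_j$ sits near the left end (e.g.\ $j=2$, $|L|=1$) is simply uncovered: your $X$ would then have to live in $RR$, where $\lca_{\stone{i}}(X)$ can be a path node above $u_j$ and hence comparable with $\lca_{\stone{i}}(Y)$, unless $X$ fits inside a single $Q_{j'}$ --- which recreates the two-tree intersection problem your argument was designed to avoid. (The paper avoids this by construction: it places $Y$ inside the middle fifth $[\frac{8}{20}n',\frac{12}{20}n']$ of the common ordering and looks for $X$ near an end, so both regions are guaranteed to be large.)

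Second, your fallback when every candidate clade $D$ has fewer than $n^{1/16}$ leaves does not deliver case (ii). Feeding the restricted pair back into the previous section only gives the dual guarantee of Theorem~\ref{thm:mast-weaker}: a \emph{rooted} agreement caterpillar of size $\Omega(\frac{\log n}{\log\log n})$ or an unrooted one of size $\Omega(\log n)$. The $\log n$ branch of Lemma~\ref{lem:logn-caterpillar} needs the path-decomposition pieces of \emph{both} restricted trees to be small, while you only know the $\sttwo{i}$-side blobs are tiny; the iteration may keep taking good pairs and terminate with only the rooted $\frac{\log n}{\log\log n}$ outcome, which is neither $\ge\frac{1}{48}\log n$ nor convertible into the pair $(X,Y)$ of case (i). The paper closes exactly this case with Proposition~\ref{prop:caterpillar} (Martin--Thatte): when the leaves of a large $\stone{i}$-clade are scattered over at least $n^{1/16}$ small clades hanging off a path of $\sttwo{i}$, one leaf per clade yields a caterpillar of size $n^{1/16}$ in $S$, hence agreement of size at least $\frac{1}{3}\log n^{1/16}=\frac{1}{48}\log n$ --- this is precisely where the constant $\frac{1}{48}$ in the statement comes from, and your sketch never invokes it. The degenerate situations you defer ($a'=1$, or clades of $\sttwo{i}_{w_1}$ straddling the boundary $a$ so that $X\not\subseteq[1,a-1]$) sit in the same hard region and are not ``the same spirit'' without an argument of this kind.
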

The proof of Lemma~\ref{lem:logn-cat-strong} uses the following result established by Martin and Thatte~\cite{Martin:2013mast} and based on the earlier work by Steel and Sz\'{e}kely~\cite{Steel:2009mast}.
\begin{prop}[Martin and Thatte~\cite{Martin:2013mast}; Steel and Sz\'{e}kely~\cite{Steel:2009mast}]
	\label{prop:caterpillar}
	Any two unrooted $X$-trees $T$ and $S$ on $n$ leaves, where $T$ is a caterpillar, have a maximum agreement subtree of size at least $\frac{1}{3}\log n$.
\end{prop}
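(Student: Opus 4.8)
The plan is to reduce the statement to a purely structural claim about the single tree $S$. Fix a caterpillar ordering $(t_1,\dots,t_n)$ of the leaves of $T$ (unique up to reversal) and, after rooting $S$ arbitrarily, identify each leaf with its $T$-position. Since every restriction of a caterpillar is a caterpillar, a set $A=\{c_1,\dots,c_m\}$ (with $c_1,\dots,c_m$ listed in $T$-order) is an agreement set exactly when $S|A$ is a caterpillar whose caterpillar ordering is $c_1,\dots,c_m$ up to reversal. A rooted caterpillar with canonical ordering $(b_1,\dots,b_m)$ de-roots to the unrooted caterpillar with that same ordering, so by Observation~\ref{obs:caterpillar} applied to $S|A$ this holds precisely when the ``suffix'' least common ancestors $\lca_S(\{c_i,c_{i+1},\dots,c_m\})$ form a strictly descending chain in $S$, or (reading the $T$-order backwards) the ``prefix'' ones do. Unwinding this chain condition, $A$ is an agreement set iff there is a descending path $v_1\succ v_2\succ\cdots\succ v_{m-1}$ in $S$ together with leaves $c_i$ such that $c_i$ lies in the subtree hanging off $v_i$ that avoids $v_{i+1}$ (and $c_m\preceq v_{m-1}$ on the other side), and $c_1,\dots,c_m$ occur in increasing $T$-order — a ``descending path with $T$-monotone pendants''. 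Thus it suffices to show $S$ always admits such a configuration of size $\tfrac13\log n$, where we are free to use the $T$-order or its reverse.

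A warm-up bound of $\Omega(\sqrt{\log n})$ is immediate and illustrates why the full result needs more: take the path $P$ in $S$ joining the $T$-extreme leaves $t_1$ and $t_n$; its off-path subtrees $C_1,\dots,C_L$ partition the remaining leaves, and choosing one representative per $C_i$ and applying the Erd\H{o}s--Szekeres theorem to their $T$-positions yields a $T$-monotone selection from $\sqrt{L}$ of the $C_i$'s, hence an agreement caterpillar of size $\ge\sqrt{L}$ (a decreasing selection works too, since the caterpillar only has to match $T$ up to reversal). What remains is the case that $L$ is small, i.e. that one $C_i$ swallows most of the leaves.

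To reach $\Theta(\log n)$ I would instead run a recursion that gains an additive constant while shrinking the instance by a constant factor. Descend through $S$ along a path that at each node enters a subtree retaining a constant fraction of the leaves (a heavy path, or a centroid-style decomposition): at the top of such a step, peel one pendant leaf off as the next element of the agreement caterpillar and recurse into the large side with the $T$-order restricted to its leaves. Since the partial caterpillar need only match $T$ up to reversal, the peeled leaf may be attached at either end, giving two chances to extend. Solving $f(n)\ge f(n/c)+1$ gives $f(n)=\Omega(\log n)$, and careful tracking of constants delivers the stated $\tfrac13\log n$.

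The main obstacle — and the part the careful arguments of Steel--Sz\'ekely and Martin--Thatte are really about — is that an arbitrary peeled leaf need not attach to the recursively built caterpillar without wrecking its caterpillar shape or its order: the new leaf must attach at (a subdivision of) one of the two terminal edges of that caterpillar's spine, and simultaneously its $T$-position must lie outside the $T$-interval the caterpillar already spans, so that it genuinely becomes a new endpoint. Both constraints can fail — for instance if the recursive caterpillar happens to include the globally $T$-extreme leaves, or if its end-cherries are also cherries of $S$. The remedy is to strengthen the inductive hypothesis: carry along the sub-interval of $T$-positions still available together with the edge through which the recursive solution is required to exit its subtree, and argue by an averaging argument over the $\Omega(\log n)$ pendant-bearing subtrees that enough room always survives. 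This bookkeeping is exactly where the slack behind the constant $\tfrac13$ (as opposed to $1$) comes from.
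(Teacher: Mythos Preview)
The paper does not prove Proposition~\ref{prop:caterpillar}; it is quoted from Martin--Thatte and Steel--Sz\'ekely and invoked as a black box inside the proof of Lemma~\ref{lem:logn-cat-strong}. There is thus no in-paper argument to compare your proposal against.

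Evaluated on its own, your proposal has a genuine gap. The reduction in the first paragraph is essentially sound (the ``iff'' is slightly overstated --- for an arbitrary rooting of $S$ the suffix/prefix lca-chains need not be \emph{strictly} descending even when the unrooted $S|A$ is the correct caterpillar --- but you only need the ``if'' direction, which is fine). The real problem is the last two paragraphs. You correctly isolate the central obstacle: a greedily peeled pendant need not attach at a terminal spine-edge of the recursively built caterpillar, and its $T$-position need not lie outside the $T$-interval that caterpillar already spans. But you do not resolve it. ``Strengthen the inductive hypothesis: carry along the available $T$-interval together with the exit edge, and argue by averaging'' names a strategy, not a proof --- you never actually state the invariant, and you never verify that the averaging closes. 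This is precisely the hard part of the cited arguments, and the difficulty is real: a na\"ive heavy-path descent peeling one leaf per level does not obviously work, because requiring the peeled leaf to be $T$-extremal relative to the \emph{future} recursive solution couples the choice at each level to all later levels in a way a one-step pigeonhole does not control. Until the strengthened hypothesis is written down and the induction carried through, the argument is incomplete.
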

\begin{proof}[Proof of Lemma~\ref{lem:logn-cat-strong}]
	Similar to the proof of Lemma~\ref{lem:logn-caterpillar} we identify leaves in $\stone{i}$ and $\sttwo{i}$ with their indices, $1, \ldots, |X^{(i)}| = n'$, in their common left-to-right leaf ordering. Then each subtree $Q_j$ or $R_l$ induces an integer interval inside $[1, n']$ of size at most $n'/20$. Consider now the leaves in the $I := [\frac{8}{20}n', \frac{12}{20}n']$ interval. Let $l$ be the smallest leaf (integer) from $I$ such that the subtrees $Q_{i_1}$ and $R_{h_1}$ that contain $l$ ``lie" completely within $I$ (that is, $\Le(Q_{i_1})$ and $\Le(R_{h_1})$ are within $I$). Observe that $l \le \frac{9}{20}n'$. Similarly, we define $r$ to be the largest leaf (integer) from $I$ such that subtrees $Q_{i_s}$ and $R_{h_t}$ that contain $r$ lie completely within $I$; then $r \ge \frac{11}{20}n'$. We now focus on subtrees $Q_{i_1}, \ldots, Q_{i_s}$ and $R_{h_1}, \ldots, R_{h_t}$. Observe that $\bigcup_{l = i_1}^{i_s}{\Le(Q_{l})}$ and $\bigcup_{l=h_1}^{h_t}{\Le(R_{l})}$ are supersets of $\set{l, \ldots, r}$ and $|\set{l, \ldots, r}| \ge \frac{2}{20}n'$.
	By a simple modification of Lemma~\ref{lem:logn-caterpillar} it is not difficult to see that either at least one of the $Q_{i_1}, \ldots, Q_{i_s}, R_{h_1}, \ldots, R_{h_t}$ subtrees is of size at least $\frac{(2/20)n'}{\log n}$ or $T$ and $S$ agree on a caterpillar of size at least $\log n$ (i.e., statement (ii) holds). Assume now that the former holds and let $Q_y$ be a subtree of size at least $\frac{(2/20)n'}{\log n}$. In case there is no such $Q_y$ and a subtree of size at least $\frac{(2/20)n'}{\log n}$ is among $R_{h_1}, \ldots, R_{h_t}$ subtrees, the argument that we present next changes only in one aspect, which we point out in the end.
	
	Let us now focus on the $[1, \frac{4}{20}n']$ interval. Again, by considering a simple modification of Lemma~\ref{lem:logn-caterpillar}, there must exists a subtree $Q_x$ (or $R_x$, which we disregard for symmetry) of size at least $\frac{(4/20)n'}{\log n}$ and $\Le(Q_x)$ lies within $[1, \frac{5}{20}n')$ -- otherwise, case (ii) of our Lemma holds. Recall that $n' \ge n^{\frac{1}{4}}$; then for sufficiently large $n$ we have $\frac{(4/20)n'}{\log n} \ge n^{\frac{1}{8}}$.
	
	Let now $R_{k_1}, \ldots, R_{k_f}$ be the subtrees from $\sttwo{i}$ that intersect with $Q_x$ on the leaf-set. We claim that either (1) at least one of $R_{k_l}$ subtrees intersects with $Q_x$ on at least $n^{\frac{1}{16}}$ leaves or (2) case (ii) of our lemma holds.
	Assume that all subtrees $R_{k_l}$ that intersect with $Q_x$ have $|\Le(R_{k_l}) \cap \Le(Q_x)| < n^{\frac{1}{16}}$; then, given the established $|Q_x| \ge n^{\frac{1}{8}}$ bound, the number of such subtrees must be at least $n^{\frac{1}{16}}$. Taking a single leaf from each of the $R_{k_1}, \ldots, R_{k_f}$ subtrees will then produce a set $M$ with $S|M$ being a caterpillar of size at least $n^{\frac{1}{16}}$. Proposition~\ref{prop:caterpillar} due to Martin et al.~\cite{Martin:2013mast} then implies that $T|M$ and $S|M$ (and hence $T$ and $S$) agree on a caterpillar of size at least $1/3\log(n^{\frac{1}{16}}) = 1/48\log n$; i.e., case (ii) holds.
	Otherwise, let $R_p$ be the subtree with $|\Le(Q_x) \cap \Le(R_p)| \ge n^{\frac{1}{16}}$. Clearly, $\Le(R_p)$ is within the $[1, \frac{6}{20}n')$ interval and therefore $\Le(R_p)$ does not intersect with $\Le(Q_y)$.
	
	Summing up the above arguments, define $Y := \Le(Q_y)$ and $X := \Le(Q_x) \cap \Le(R_p)$. We claim that these two sets satisfy condition (i) of our lemma. The conditions on size are satisfied by the construction; hence, we only need to confirm the incomparability conditions. Note that $\lca_{\stone{i}}(X)$ and $\lca_{\stone{i}}(Y)$ are located within the $Q_x$ and $Q_y$ subtrees respectively ($x \ne y$) and therefore are incomparable. Further, let $R_{j_1}, \ldots, R_{j_e}$ be the subtrees that intersect on leaves with the set $Y$; given that $Y$ is within $[\frac{8}{20}n', \frac{12}{20}n']$ we have $\Le(R_{j_l})$ lying within $(\frac{7}{20}n', \frac{13}{20}n')$ for all $1 \le l \le e$. Note now that $\lca_{\sttwo{i}}(X)$ is within the $R_p$ subtree and $\Le(R_p)$ precedes the $(\frac{7}{20}n', \frac{13}{20}n')$ interval. Hence, if $v_p$ is the parent of the root of $R_p$ then $\lca{\sttwo{i}}(Y)$ must be located below it and $\lca_{\sttwo{i}}(X)$ is incomparable with $\lca_{\sttwo{i}}(Y)$ (see Figure~\ref{fig:XY} for an illustration). That is, case (i) holds.
	
	Finally, we come back to the assumption that $Q_y$ exist: if it does not, then a similar subtree, $R_y$, must exist in $\sttwo{i}$ and the argument proceeds the same way with the exception that we would locate set $X$ in the $[\frac{15}{20}n', n']$ interval instead of $[1, \frac{5}{20}n']$.
\end{proof}

\begin{figure}
	\centering
	\begin{tikzpicture}[scale=0.65]
	\tikzstyle{vertex} = [circle,draw,fill,inner sep=0pt, minimum size=4pt]
	\tikzstyle{edge} = [draw,thick,-]
	\tikzset{vlabel/.style 2 args={#1=1pt of #2}}
	\def\step{0.8}
	\def\subtree(#1,#2,#3,#4)%
		{\draw (#1,#2) -- (#1 - #3,0) -- (#1 + #3,0) -- cycle;%
		 \node [] at (#1,10pt) {#4};}
	 
	\node [vertex] (q1) at (0,0) {};
	\coordinate [] (L1) at (8,5) {};
	\foreach \x/\r/\label/\name in {{3.5/0.7/$Q_x$/qx}, {7/1.1/$Q_y$/qy}, {10.5/0.8/$Q_k$/qk}} {
		\subtree(\x, 1.4, \r, \label);
		\coordinate [] (t) at (\x - 4,1.4 + 4) {};
		\path [name path = p1] (\x,1.4) -- (t);
		\path [name path = p2] (q1) -- (L1);
		\path [name intersections={of=p1 and p2,by=\name}];
		\path [edge] (\x,1.4) -- (\name);
		\node [vertex] at (\name) {};
	}
	\path [edge] (q1) -- (qk); 
	\node [vlabel={above}{q1}] {$Q_1$};
	\path [] (q1) -- (3.5 - 0.7,0) node [midway] {$\ldots$};
	\path [] (3.5+0.7,0) -- (7 - 1.1,0) node [midway] {$\ldots$};
	\path [] (7+1.1,0) -- (10.5 - 0.8,0) node [midway] {$\ldots$};
	\foreach \vo/\vt in {{q1/qx}, {qx/qy}, {qy/qk}}
		\path [] (\vo) -- (\vt) node [midway,rotate=30,yshift=-4pt] {$\ldots$};
	\begin{pgflowlevelscope}{\pgftransformscale{0.5}}
		\draw [decorate,decoration={brace,amplitude=10pt,mirror},yshift=-5pt,very thick] (5.6,0) -- (7.6,0) node [black,midway,yshift=-0.9cm] {\Huge$X$};
		\draw [decorate,decoration={brace,amplitude=10pt,mirror},yshift=-5pt,very thick] (31.2,0) -- (33.2,0) node [black,midway,yshift=-0.9cm] {\Huge$X$};
		\draw [decorate,decoration={brace,amplitude=10pt,mirror},yshift=-5pt,very thick] (11.8,0) -- (16.2,0) node [black,midway,yshift=-0.9cm] {\Huge$Y$};
		\draw [decorate,decoration={brace,amplitude=10pt,mirror},yshift=-5pt,very thick] (37,0) -- (41.4,0) node [black,midway,yshift=-0.9cm] {\Huge$Y$};
	\end{pgflowlevelscope}
	
	\node [vertex] (rm) at (24,0) {};
	\coordinate [] (L2) at (24-8,5) {};
	\foreach \x/\r\label/\name in {{24-3/0.5/$R_{j_e}$/rje}, {24-4.7/0.5/$R_{j_2}$/rj2}, {24-5.8/0.5/$R_{j_1}$/rj1}, {24-8/0.6/$R_{p}$/rp}, {24-10.6/0.7/$R_1$/r1}} {
		\subtree(\x, 1.4, \r, \label);
		\coordinate [] (t) at (\x + 4,1.4 + 4) {};
		\path [name path = p1] (\x,1.4) -- (t);
		\path [name path = p2] (rm) -- (L2);
		\path [name intersections={of=p1 and p2,by=\name}];
		\path [edge] (\x,1.4) -- (\name);
		\node [vertex] at (\name) {};
	}
	\path [edge] (rm) -- (r1);
	\node [vlabel={above}{rm}] {$R_m$};
	\path [] (rm) -- (24-3+0.5,0) node [midway] {$\ldots$};
	\path [] (24-3-0.5,0) -- (24-4.7+0.5,0) node [midway] {...};
	\path [] (24-5.8-0.5,0) -- (24-8+0.6,0) node [midway] {$\ldots$};
	\path [] (24-8-0.6,0) -- (24-10.6+0.7,0) node [midway] {$\ldots$};
	\foreach \vo/\vt in {{rm/rje}, {rje/rj2}, {rj1/rp}, {rp/r1}}
		\path [] (\vo) -- (\vt) node [midway,rotate=-30,yshift=-4pt] {$\ldots$};
\end{tikzpicture}
	\caption{An illustration of the potential structure of $\stone{i}$ and $\sttwo{i}$ trees for the proof of Lemma~\ref{lem:logn-cat-strong} -- e.g., when case (ii) from that lemma does not hold.}
	\label{fig:XY}
\end{figure}

If case (ii) from the above lemma holds, then Theorem~\ref{thm:mast} clearly holds as well. Otherwise, assume that case (i) holds. Due to Theorem~\ref{thm:mast-weaker} either (1) trees $\stone{i}|X$ and $\sttwo{i}|X$ agree on a rooted caterpillar of size at least
\[\frac{\log n^{\frac{1}{16}}}{4 \log\log n^{\frac{1}{16}}} = \frac{1}{16\cdot 4}\frac{\log n}{\log\log n - \log 16} \ge \frac{1}{64}\frac{\log n}{\log\log n}\]
or (2) $T$ and $S$ agree on at least a $\log n^{\frac{1}{16}} = \frac{1}{16}\log n$ caterpillar. Algorithm~\ref{alg:mast-main} summarizes all these observations. 

\begin{algorithm}[!h]
	\caption{$\Omega(\log n)$ MAST}
	\label{alg:mast-main}
	\begin{algorithmic}[1]
		\State \textbf{Input: } rooted $X^{(1)}$-trees $\stone{1}, \sttwo{1}$ with the same leaf orderings.
		\State $M := \emptyset, i := 1, C:= 40;$
		\While{$|X^{(i)}| \ge n^{\frac{1}{4}}$}
		\If{exists a good pair $(x, Y)$ with $|Y| \ge \frac{|X^{(i)}|}{C}$}
			\State Add $x$ to $M$ and set $X^{(i+1)} := Y, \stone{i+1} := \stone{i}|Y, \sttwo{i+1} := \sttwo{i}|Y$; \label{ln:iter-1}
		\ElsIf{exists a pair $(X, Y)$ as described in Lemma~\ref{lem:logn-cat-strong}, case (i)}
			\If{$\stone{i}|X$ and $\sttwo{i}|X$ agree on $M'$ leaves (rooted caterpillar)}
				\State Add leaves from $M'$ (with $|M'| \ge \frac{1}{64}\frac{\log n}{\log\log n}$) to $M$ and \label{ln:XY1}
				\State Set $X^{(i+1)} := Y, \stone{i+1} := \stone{i}|Y, \sttwo{i+1} := \sttwo{i}|Y$; \label{ln:iter-2} \label{ln:XY2}
			\Else
				\State There must exist $A$ ($|A| \ge \frac{1}{16}\log n$) such that $T$ and $S$ agree on $A$;
				\State \Return $A$. \label{ln:exit-1}
			\EndIf
		\Else
			\State There must exist a leaf-set $A$ ($|A| \ge \frac{1}{48}\log n$) such that $T$ and $S$ agree on $A$;
			\State \Return $A$. \label{ln:exit-2}
		\EndIf
		\EndWhile
		\State \Return $M$.
	\end{algorithmic}
\end{algorithm}

If the construction presented in Algorithm~\ref{alg:mast-main} exits on lines~\ref{ln:exit-1} or \ref{ln:exit-2} then we directly get at least a $\frac{1}{48}\log n$ agreement subtree. Assume now that these lines are never reached and the algorithm returns the set $M$. It is not difficult to see that $\stone{1}|M$ should be equivalent to $\sttwo{1}|M$ and therefore $T$ and $S$ agree on $M$. This can be seen by considering the following observation (a generalization of Observation~\ref{obs:caterpillar}).
\begin{observ}
	\label{obs:main-agreement}
	Let $(X_1, \ldots, X_p)$ be an ordered partition of $X$ and let rooted $X$-trees $T'$ and $S'$ have the following properties:
	\begin{itemize}
		\item $\lca_{T'}(X_i)$ is incomparable with $\displaystyle \lca_{T'}\bigg(\bigcup_{j=i+1}^{p}X_j\bigg)$ for all $1 \le i < p$;
		\item Similarly, $\lca_{S'}(X_i)$ is incomparable with $\displaystyle \lca_{S'}\bigg(\bigcup_{j=i+1}^{p}X_j\bigg)$ for all $1 \le i < p$;
		\item $T'|X_i = S'|X_i$ for all $1 \le i \le p$.
	\end{itemize}
	Then $T' = S'$.
\end{observ}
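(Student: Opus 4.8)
The plan is to show that each of $T'$ and $S'$ is forced into the same ``caterpillar-like'' skeleton, with the blocks $T'|X_i = S'|X_i$ hanging off it in the prescribed order. Write $Y_i := X_i \cup X_{i+1} \cup \cdots \cup X_p$ for $1 \le i \le p$, so that $Y_1 = X$, $Y_p = X_p$, and $Y_i = X_i \sqcup Y_{i+1}$ for $i < p$. For a rooted tree $U \in \{T', S'\}$ set $\ell_i^U := \lca_U(Y_i)$; note that $\ell_1^U = \lca_U(X) = \rt(U)$, since any label-preserving graph isomorphism of rooted binary trees maps root to root (the root being the unique degree-two vertex).

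First I would establish the key structural fact by induction on $i$: $\Le(U_{\ell_i^U}) = Y_i$, and for $i < p$ the two children of $\ell_i^U$ are exactly $\lca_U(X_i)$ and $\ell_{i+1}^U$, with $\Le(U_{\lca_U(X_i)}) = X_i$ and $\Le(U_{\ell_{i+1}^U}) = Y_{i+1}$. The base case $i=1$ is immediate from $\ell_1^U = \rt(U)$. For the inductive step, observe that $\ell_i^U = \lca_U(Y_i) = \lca_U(X_i \cup Y_{i+1}) = \lca_U\big(\{\lca_U(X_i),\, \ell_{i+1}^U\}\big)$; by the incomparability hypothesis for $U$ the two nodes $\lca_U(X_i)$ and $\ell_{i+1}^U$ are incomparable, hence they lie in distinct child subtrees $C_1 \ni \lca_U(X_i)$ and $C_2 \ni \ell_{i+1}^U$ of $\ell_i^U$. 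Since $\Le(U_{\ell_i^U}) = Y_i = X_i \sqcup Y_{i+1}$ by the inductive hypothesis, while $X_i \subseteq \Le(C_1)$ and $Y_{i+1} \subseteq \Le(C_2)$ and $\Le(C_1) \cap \Le(C_2) = \emptyset$, we get $\Le(C_1) = X_i$ and $\Le(C_2) = Y_{i+1}$; in particular the root of $C_1$ is $\lca_U(X_i)$ and the root of $C_2$ is $\lca_U(Y_{i+1}) = \ell_{i+1}^U$, which completes the induction. Taking $i = p$ yields $U_{\ell_p^U} = U|X_p$.

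It follows that $U$ (whether $T'$ or $S'$) is reconstructed in a unique way from the data $p$ and $U|X_1, \ldots, U|X_p$: the nodes $\rt(U) = \ell_1^U \succ \ell_2^U \succ \cdots \succ \ell_p^U$ form a path, the subtree hanging off $\ell_i^U$ away from that path is $U_{\lca_U(X_i)} = U|X_i$ for $i < p$, and $U_{\ell_p^U} = U|X_p$. Since $T'|X_i = S'|X_i$ for all $i$ by hypothesis, the bijection sending $\ell_i^{T'} \mapsto \ell_i^{S'}$ and mapping each block of $T'$ to the corresponding block of $S'$ is a label-preserving isomorphism of rooted trees, so $T' = S'$. (Alternatively one may induct on $p$: split $X = X_1 \sqcup Y_2$ at the root using the $i = 1$ incomparability, identify the two sides as $T'|X_1 = S'|X_1$ and $T'|Y_2,\, S'|Y_2$, and apply the inductive hypothesis to $T'|Y_2$ and $S'|Y_2$ with the partition $(X_2, \ldots, X_p)$ of $Y_2$ — restriction to $Y_2$ preserves all the least common ancestors and incomparabilities involved.)

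I expect the main obstacle to be the bookkeeping in the structural induction — precisely, arguing that the subtree rooted at $\ell_i^U$ contains no leaves outside $Y_i$, so that the two child subtrees carry leaf sets exactly $X_i$ and $Y_{i+1}$ with no leakage. This is the step that lets the decomposition propagate from the root all the way down, and it rests on $Y_1 = X$ exhausting the leaf set together with the partition identity $Y_i = X_i \sqcup Y_{i+1}$; everything else is routine manipulation of least common ancestors.
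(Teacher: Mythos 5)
Your argument is correct: the induction showing $\Le(U_{\ell_i^U})=Y_i$ and that the two children of $\ell_i^U$ carry exactly $X_i$ and $Y_{i+1}$ (so the nodes $\lca_U(Y_1)\succ\cdots\succ\lca_U(Y_p)$ form a spine with the blocks $U|X_i$ hanging off it) is exactly the structural fact the paper relies on but leaves unproved, since it states this as an Observation generalizing its caterpillar characterization (Observation~\ref{obs:caterpillar}). So you have supplied a complete and correct proof of the intended reasoning; the only cosmetic point is that $\lca_U(X)=\rt(U)$ holds simply because both child subtrees of the root contain leaves, not because isomorphisms map root to root.
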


Let us now determine the lower bound on the size of $M$. Assume that line~\ref{ln:iter-1} is executed $p$ times overall, while line~\ref{ln:iter-2} is executed $q$ times. The size of $M$ is then at least $p + q \cdot \frac{1}{64}\frac{\log n}{\log \log n}$. Further, let $X^{(p+q+1)}$ be the set of leaves after the last iteration of the algorithm (i.e., $|X^{(p+q+1)}| < n^{\frac{1}{4}}$). We then have
\begin{align*}
|X^{(p+q+1)}| \cdot C^p \cdot (10 \log n)^q &\ge |X^{(1)}| \ge \sqrt{n}\\
n^{\frac{1}{4}} C^p \cdot (10 \log n)^q &\ge \sqrt{n}\\
p \log C + q(\log \log n + \log 10) &\ge \frac{1}{4}\log n\\
p &\ge \frac{1}{\log C} \big(\frac{1}{4}\log n - q\log\log n - q\log 10\big).
\end{align*}
Finally,
\[
|M| \ge p + q \cdot \frac{1}{64}\frac{\log n}{\log\log n} \ge \frac{\log n}{4\log C} + q(\frac{1}{64}\frac{\log n}{\log\log n} - \frac{\log\log n}{\log C} - \frac{\log 10}{\log C}).
\]
Note that 
\[
\frac{1}{64}\frac{\log n}{\log\log n} \ge \frac{\log\log n}{\log C} + \frac{\log 10}{\log C}
\]
for sufficiently large $n$; hence $|M| \ge \frac{\log n}{4\log C}$ for large $n$ and Theorem~\ref{thm:mast} holds.

\bibliography{complete}

\begin{thebibliography}{1}

\bibitem{Amir:1997mast}
A.~Amir and D.~Keselman.
\newblock Maximum agreement subtree in a set of evolutionary trees: Metrics and
  efficient algorithms.
\newblock {\em SIAM Journal on Computing}, 26(6):1656--1669, 1997.

\bibitem{Cole:2000mast}
R.~Cole, M.~Farach-Colton, R.~Hariharan, T.~Przytycka, and M.~Thorup.
\newblock An ${O}(n \log n)$ algorithm for the maximum agreement subtree
  problem for binary trees.
\newblock {\em SIAM Journal on Computing}, 30(5):1385--1404, 2000.

\bibitem{Erdos:1935}
P.~Erd{\H o}s and G.~Szekeres.
\newblock A combinatorial problem in geometry.
\newblock {\em Compositio mathematica}, 2:463--470, 1935.

\bibitem{Kubicka:1992mast}
E.~Kubicka, G.~Kubicki, and F.~McMorris.
\newblock On agreement subtrees of two binary trees.
\newblock {\em Congressus Numerantium}, pages 217--217, 1992.

\bibitem{Martin:2013mast}
D.~M. Martin and B.~D. Thatte.
\newblock The maximum agreement subtree problem.
\newblock {\em Discrete Applied Mathematics}, 161(13-14):1805--1817, 2013.

\bibitem{Steel:2009mast}
M.~Steel and L.~A. Sz{\'e}kely.
\newblock An improved bound on the maximum agreement subtree problem.
\newblock {\em Applied mathematics letters}, 22(11):1778--1780, 2009.

\bibitem{Steel:1993mast-poly}
M.~Steel and T.~Warnow.
\newblock Kaikoura tree theorems: Computing the maximum agreement subtree.
\newblock {\em Information Processing Letters}, 48(2):77--82, 1993.

\end{thebibliography}
\bibliographystyle{abbrv}

\end{document}